\documentclass[10pt,a4paper,twoside,reqno]{amsart}

\usepackage[a4paper,%bindingoffset=0.2in,
left=1in,right=1in,top=1.2in,bottom=1.2in,%
footskip=.25in]{geometry}

\usepackage{bm}
\usepackage{latexsym, amsmath, amstext, amssymb, amsfonts, amscd, bm, array, multirow, amsbsy, mathrsfs, stmaryrd}
\usepackage{amsthm}
\usepackage{t1enc}
\usepackage[mathscr]{eucal}
\usepackage{indentfirst}
\usepackage{pb-diagram}
\usepackage{graphicx}
\usepackage{fancyhdr}
\usepackage{fancybox}
\usepackage{color}
\usepackage{tikz-cd}
\usetikzlibrary{arrows}
\usepackage[all]{xy}
\usepackage{hyperref}
\usepackage{tikz}
\usepackage{xparse}
\hypersetup{colorlinks=false,pdfborderstyle={/S/U/W 0}}
\usetikzlibrary{matrix}
\usepackage{upgreek}
\usepackage[utf8]{inputenc}
\usepackage[T1]{fontenc}
\usepackage{bbm}
\usepackage{enumitem}
\setlist[enumerate]{leftmargin=*,noitemsep}
\setlist[itemize]{leftmargin=*,noitemsep}
\usepackage{float}
\usepackage{soul}

\newcommand{\doi}[1]{\url{https://doi.org/#1}}
\usepackage[noadjust]{cite}

\makeatletter
\def\@defaultbiblabelstyle#1{[#1]}
\makeatother

\usepackage{url}
\theoremstyle{plain}
\newtheorem{thm}{Theorem}[section]

\newtheorem{prop}[thm]{Proposition}

\newtheorem{lemma}[thm]{Lemma}

\theoremstyle{definition}
\newtheorem{definition}[thm]{Definition}

\theoremstyle{remark}
\newtheorem{remark}[thm]{Remark}

\newtheorem*{ack}{Acknowledgements}

\newcommand{\End}{\mathrm{End}}

\DeclareFontFamily{U}{rsf}{}
\DeclareFontShape{U}{rsf}{m}{n}{<5> <6> rsfs5 <7> <8> <9> rsfs7 <10-> rsfs10}{}
\DeclareMathAlphabet\Scr{U}{rsf}{m}{n}

\def\N{\mathbb{N}}

\def\C{\mathbb{C}}
\def\R{\mathbb{R}}

\def\Id{\mathrm{Id}}

\newcommand{\be}{\begin{equation*}}
\newcommand{\ee}{\end{equation*}}
\newcommand{\ben}{\begin{equation}}
\newcommand{\een}{\end{equation}}
\newcommand{\beqa}{\begin{eqnarray*}}
\newcommand{\eeqa}{\end{eqnarray*}}
\newcommand{\beqan}{\begin{eqnarray}}
\newcommand{\eeqan}{\end{eqnarray}}

\def\scB{\Scr B}

\def\Spin{\mathrm{Spin}}

\def\Spin{\mathrm{Spin}}

\def\U{\mathrm{U}}

\def\G_2{\mathrm{G_2}}

\def\G{\mathrm{G}}

\def\R{\mathbb{R}}

\def\mCl{\mathbb{C}\mathrm{l}}

\def\cL{\mathcal{L}}

\newcommand{\escal}[1]{\langle#1\rangle}

\renewcommand{\ker}{\mathrm{Ker}}

\newcolumntype{P}[1]{>{\centering\arraybackslash}p{#1}}

\usepackage[math]{anttor}
\usepackage{orcidlink}

\allowdisplaybreaks
\usepackage{mlmodern}

\begin{document}

\title[Sasakian manifolds and spin-c Killing spinors]{Sasakian manifolds and spin-c Killing spinors}

\author[Alejandro Gil-García]{Alejandro Gil-García \orcidlink{0000-0002-9370-241X}}
\address{Scuola Internazionale Superiore di Studi Avanzati, Trieste, Italy}
\email{agilgarc@sissa.it}

\author[C. S. Shahbazi]{C. S. Shahbazi \orcidlink{0000-0003-1185-9569}}
\address{Departamento de Matem\'aticas, Universidad UNED - Madrid, Reino de Espa\~na}
\email{cshahbazi@mat.uned.es}

\begin{abstract}
Using the theory of complex spinorial forms, we prove that an odd-dimensional Riemannian manifold admits a pure spin-c Killing spinor with a real Killing constant $\alpha \in \mathbb{R}^{\ast}$ if and only if it is $\alpha$-Sasakian, thereby obtaining an extension of a well-known result by A. Moroianu that, under the purity assumption, does not require simple connectivity or completeness.\bigskip

\noindent
\emph{Keywords: spinorial forms, spin-c Killing spinors, Sasakian manifolds}\medskip

\noindent
\emph{MSC2020: 53C25, 53C27}

\end{abstract}
 
\maketitle

%\setcounter{tocdepth}{1} %doesn't display subsections in TOC 

%\tableofcontents

% % % % % % % % % % % % % % % % % % % % % % % % % % % % % % % % % % % % % %
% % % % % % % % % % % % % % % % % % % % % % % % % % % % % % % % % % % % % %

\section{Introduction}

% % % % % % % % % % % % % % % % % % % % % % % % % % % % % % % % % % % % % %
% % % % % % % % % % % % % % % % % % % % % % % % % % % % % % % % % % % % % %

This note is devoted to proving the following result.

\begin{thm}
\label{thm:maintheorem}
An odd-dimensional spin-c Riemannian manifold admits a pure spin-c Killing spinor with real Killing constant $\alpha \in \mathbb{R}^{\ast}$ if and only if it is $\alpha$-Sasakian.
\end{thm}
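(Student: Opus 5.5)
The plan is to translate the spinorial Killing equation into a system of differential equations for the exterior forms obtained by squaring the spinor, and then to recognize the $\alpha$-Sasakian structure equations in that system. Let $(M,g)$ have dimension $2n+1$, and let $\eta$ be a spin-c Killing spinor:
\[
\nabla^A_X \eta = \alpha\, X\cdot\eta .
\]
Here $A$ is the connection on the determinant line bundle. Using the theory of complex spinorial forms, the bilinear square $\eta\otimes\eta^\ast$ is identified with a complex polyform $\Psi$. Purity of $\eta$ is equivalent to a normal form for $\Psi$. After normalizing $|\eta|$, which is constant because
\[
X|\eta|^2 = 2\,\mathrm{Re}\,\langle \alpha X\cdot\eta,\eta\rangle = 0
\]
for real $\alpha$, the polyform takes the shape
\[
\Psi = \tfrac{1}{2^n}\,(1\pm\ast)\,\big(\theta\wedge e^{\pm i\omega}\big)\quad\text{up to conventions}.
\]
In this expression $\theta$ is a unit $1$-form and $\omega$ is a $2$-form of rank $2n$ on $\ker\theta$. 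The endomorphism $\Phi=\omega^\sharp$ restricted to $\ker\theta$ is an orthogonal complex structure, and $\iota_{\theta^\sharp}\omega=0$. Thus $(g,\theta,\Phi)$ is an almost contact metric structure, and conversely every almost contact metric structure locally arises from a pure spinor of this form. The spin-c freedom absorbs the phase, so no global assumption is needed.

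The next step is to differentiate. The Killing equation implies
\[
\nabla_X\Psi = \alpha\,(X\cdot\Psi - \Psi\cdot X),
\]
with the connection $A$ dropping out, because $\Psi$ is built from $\eta$ and its conjugate and the $U(1)$-charges cancel. Projecting onto form degrees one and two gives, with degree and sign conventions to be fixed,
\[
\nabla_X\theta = -\alpha\,\iota_X\omega \quad(\text{equivalently } \nabla\xi = -\alpha\Phi),
\qquad
\nabla_X\omega = \alpha\,X^\flat\wedge\theta .
\]
The first equation shows that $\xi=\theta^\sharp$ is a unit Killing field with $d\theta=2\alpha\,\omega$ up to sign. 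The second equation, together with the first, is exactly the characterization of $\alpha$-Sasakian structures:
\[
(\nabla_X\Phi)Y = \alpha\big(g(X,Y)\xi - \theta(Y)X\big).
\]
This gives the forward implication. I will check that the sign and scaling of $\alpha$ match the paper's definition of $\alpha$-Sasakian, possibly after replacing $\omega$ by $-\omega$.

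For the converse, I start from an $\alpha$-Sasakian structure $(\xi,\theta,\Phi)$. The transverse complex structure gives a spin-c structure whose auxiliary line bundle is the anticanonical bundle of $\ker\theta$. I then take $\Psi$ as above; it satisfies the algebraic purity conditions and hence is the square of a pure spinor $\eta$, defined up to a phase. The spinorial forms theorem states that the differential system on $\Psi$ is equivalent to the Killing equation for $\eta$ for a suitable connection $A$, namely the one making $\eta$ parallel in phase. So I only need to verify
\[
\nabla_X\Psi = \alpha\,(X\cdot\Psi - \Psi\cdot X)
\]
in all degrees, not just in degrees one and two. Since $\Psi$ is generated algebraically by $\theta$ and $\omega$, the Leibniz rule reduces this to the two structure equations above.

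I expect the main obstacle to be this last equivalence together with the bookkeeping in the differentiation step. Two points need care. First, the equation for the polyform must be shown to be equivalent to, and not merely implied by, the spinor equation. Second, the degree-one and degree-two components must capture everything. For the first point I will use that, for a pure spinor, the Clifford annihilator determines $\eta$ up to a complex function. The polyform equation then forces $\nabla^A\eta - \alpha X\cdot\eta$ to be proportional to $\eta$ with purely imaginary coefficient, and that coefficient is absorbed into $A$. For the second point I will check the higher-degree components directly via $\nabla\omega^k = k\,\omega^{k-1}\wedge\nabla\omega$.
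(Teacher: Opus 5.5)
Your route is correct in outline, but it is genuinely different from the paper's.

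The paper never forms the Hermitian square. It uses the complex-bilinear square $\rho=\scB(\eta,\eta)$, a homogeneous $\mathcal{L}$-valued Cartan $n$-form. Its equation $\nabla^{g,A}_X\rho=i^{n+1}\alpha\ast(X^\flat\wedge\rho)$ still contains $A$ (Proposition~\ref{prop:KillingSpinorBilinear}, which rests on Thm.~5.10 of the companion paper). So in the \emph{only if} direction the paper must build $\zeta$ and $\omega$ as quadratic expressions $i^n\ast(\rho\wedge\overline{\rho})$ and $\rho\triangle_{n-1}\overline{\rho}$, and derive the structure equations by direct computation. In the \emph{if} direction it takes $\mathcal{L}=\wedge^n\mathcal{H}^{(1,0)}$ and writes $A$ explicitly as the determinant connection of the projected Levi-Civita connection shifted by $(-1)^{\binom{n}{2}}i\alpha\zeta$. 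It then verifies a single-degree equation.

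Your $\Psi=\eta\otimes\eta^\dagger$ is untwisted, so $A$ drops out. Up to constants, $\theta$ and $\omega$ are just its degree-one and degree-two components, and the forward direction is almost immediate. In the converse you replace the explicit construction of $A$ by an observation. Set $D_X\eta:=\nabla^{g,A}_X\eta-\alpha X\cdot\eta$; then $\nabla_X\Psi=\alpha[X,\Psi]$ gives $D_X\eta\otimes\eta^\dagger+\eta\otimes(D_X\eta)^\dagger=0$. Hence $D_X\eta=f(X)\eta$ with $f$ imaginary, and $f$ is absorbed into $A$. This uses only the rank-one form of $\Psi$, not the annihilator.

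The trade-off is as follows. Your $\Psi$ is inhomogeneous, so the converse must be checked in every degree with the exact normal form, and you get no explicit formula for $A$. The paper obtains $A$ explicitly and works in one degree, but relies on the general machinery of the companion papers.

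Two things need repair when you write it out.

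First, the normal form as written is wrong. With the ordinary Hodge star, $(1\pm\ast)(\theta\wedge e^{\pm i\omega})$ is not the Hermitian square of any spinor; already in dimension $3$ no choice of signs gives a rank-one endomorphism. Computing the projector onto the vacuum spinor shows that $\Psi$ is proportional to $(1\pm i\theta)\wedge e^{i\omega}$ in the full Clifford algebra (minus convention). This is your formula with $\ast$ replaced by a suitable multiple of $\ast\tau$. The fixed ratio between the coefficients of $\theta\wedge\omega^{k}$ and $\omega^{k+1}$ is exactly what makes your Leibniz check close up in all degrees. With your normalization $\nabla_X\eta=\alpha X\cdot\eta$ the result is $2\alpha$-Sasakian, which matches the paper's factor $\tfrac{\alpha}{2}$.

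Second, a global square root of $\Psi$ exists only if its image line bundle in $S$ is trivial. So do not extract $\eta$ ``up to a phase''. Instead take $\eta$ to be the canonical section $1$ of $\wedge^{0,\bullet}\mathcal{H}^\ast$, the spinor bundle of the canonical spin-c structure you already chose. Alternatively, twist that structure by the inverse of the image line bundle.
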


This theorem extends, under the purity assumption, the odd-dimensional case of Moroianu's classification \cite[Thm.\ 4.1]{Moroianu1997} of spin-c Killing spinors, as well as the foundational results of C.\ B\"ar \cite{Bar93}, both of which require the underlying Riemannian manifold $(M,g)$ to be complete and simply connected. The proofs in these classical works rely on a Riemannian cone construction. Completeness is required to ensure via Gallot's theorem \cite{Gallot1979} that the cone is irreducible, while simply connectivity is assumed to guarantee that the holonomy group reduces to the restricted holonomy group and the local parallel spinor on the cone extends globally. Irreducibility of the cone implies, in turn, that the corresponding Killing spinor is pure, consistently connecting with the hypothesis in our theorem.\medskip

Our methods bypass the cone construction entirely and do not require either of these global topological assumptions. Instead, we use the differential theory of complex spinorial forms, developed in \cite{Algebraic_Complex_Square_2025,Complex_Differential_Spinors_2026}, to equivalently rephrase the spinorial equation in terms of an intrinsic differential system on the truncated K\"ahler-Atiyah bundle. By \cite[Thm.\ 5.10]{Complex_Differential_Spinors_2026}, solving this intrinsic differential equation on a Riemannian manifold admitting spin-c structures automatically guarantees the global existence of an irreducible complex Killing spinor with real Killing constant. The assumption that the spin-c Killing spinor is pure is \emph{sharp}: as pointed out to us by A.\ Moroianu in a private communication, a reducible cone can admit parallel spinors constructed as tensor products of spinors from its constituent factors. These product spinors may descend to the base manifold as strictly non-pure spin-c Killing spinors. Algebraically, the complex-bilinear square of such a non-pure spinor expands into an inhomogeneous exterior form, hence obstructing the construction of the contact one-form and the fundamental two-form required to obtain an almost contact metric structure.\medskip

The geometric significance of real Killing spinors was first established by T.\ Friedrich \cite{Friedrich1980}, who demonstrated that they arise as the eigenspinors minimizing the spectrum of the Dirac operator on compact spin manifolds. The subsequent classification of odd-dimensional manifolds admitting such spinors originated with the low-dimensional Sasaki-Einstein analyses of T.\ Friedrich and I.\ Kath \cite{FriedrichKath1989, FriedrichKath1990}, culminating in the aforementioned general classification by C.\ B\"ar and A.\ Moroianu \cite{Bar93,Moroianu1997}.\medskip

We expect that the methods used in this note can be applied to study the global geometry and topology of other related parallelicity conditions for irreducible complex spinors associated to both spin and spin-c structures.\medskip

Section \ref{sec:purespinors} is devoted to adapting the general formalism of \cite{Algebraic_Complex_Square_2025,Complex_Differential_Spinors_2026} to the case of irreducible pure complex spinors and the Killing equation in Riemannian signature. Theorem \ref{thm:maintheorem} is proven in Sections \ref{sec:onlyif} and \ref{sec:if}.

\begin{ack}
We thank D.\ Conti and A.\ Moroianu for illuminating discussions and comments. The work of AGG is supported by the Scuola Internazionale Superiore di Studi Avanzati (SISSA). The work of CSS was partially supported by the research grant PID2023-152822NB-I00 of the Ministry of Science of the government of Spain. 
\end{ack}

% % % % % % % % % % % % % % % % % % % % % % % % % % % % % % % % % % % % % %
% % % % % % % % % % % % % % % % % % % % % % % % % % % % % % % % % % % % % %

\section{Pure spinors in odd dimensions}
\label{sec:purespinors}

% % % % % % % % % % % % % % % % % % % % % % % % % % % % % % % % % % % % % %
% % % % % % % % % % % % % % % % % % % % % % % % % % % % % % % % % % % % % %

Let $(M,g)$ be an oriented Riemannian manifold of arbitrary odd dimension $d = 2n + 1$, and let $S$ be a bundle of irreducible complex spinors over the bundle of Clifford algebras of $(M,g)$. The existence of such a bundle of irreducible complex spinors is equivalent to the existence of a $\mathrm{Spin}^c(2n+1)$-structure $Q$ on $(M,g)$, to which $S$ is associated via the standard associated vector bundle construction \cite{LS18}:
\begin{equation*}
S = Q\times_{\mathfrak{r}_{\ell}} \mathbb{C}^{2^n},
\end{equation*}

\noindent
where $\mathfrak{r}_{\ell}\colon \Spin^c(2n+1) \to \mathrm{GL}(2^n,\C)$ denotes the restriction to the spin-c group $\Spin^c(2n+1) \subset \mCl(2n+1)$ of one of the two inequivalent irreducible representations $\gamma_{\ell}\colon\C\mathrm{l}(2n+1)\to\mathrm{End}(\smash{\mathbb{C}^{2^n}})$ of $\C\mathrm{l}(2n+1)$. These representations are distinguished by the sign $\ell\in\{\pm1\}$ that $\gamma_{\ell}$ takes on the complex volume form.  We refer the reader to \cite[Sec.~2.4]{Friedrich2000} for a detailed exposition of spin-c structures on Riemannian manifolds. In particular, $S$ has complex rank $2^n$ and admits a non-degenerate admissible complex-bilinear pairing $\scB$ \cite{AC97,ACDVP05}. This can be understood as a morphism of complex vector bundles:
\begin{equation*}
\scB \colon S\otimes S \to \mathcal{L},
\end{equation*}

\noindent
where $\mathcal{L}$ is the characteristic Hermitian complex line bundle on $M$ naturally associated with the given $\mathrm{Spin}^c(2n+1)$-structure $Q$. We will denote the corresponding principal $\U(1)$-bundle by $P\to M$. Since $S$ is associated to a spin-c structure $Q$, the combination of the Levi-Civita connection $\nabla^g$ of $(M,g)$ together with a choice of a connection $A\in\Omega^1(P,i\R)$ on the characteristic $\U(1)$-bundle $P$ defines a covariant derivative $\nabla^{g,A}$ on $S$. For ease of notation, in the following we will denote Clifford multiplication simply by a \emph{dot}.

\begin{definition}[{\cite[Chap.~IV, Sec.~9]{Spin89}}]
A nowhere vanishing section $\eta\in\Gamma(S)$ of $S$ is \emph{pure} if $\eta$ is pure at every point of $M$, that is, if at every point $p\in M$ the following subspace:
\begin{equation*}
\mathrm{Ann}(\eta_p):=\{X\in T_pM\otimes\C\mid X\cdot\eta_p=0\},
\end{equation*} 
which is automatically isotropic with respect to the complex-bilinear extension of $g$, is of maximal dimension, that is,  $\dim_\C\mathrm{Ann}(\eta_p)=n$ for every $p\in M$.
\end{definition}

\begin{definition} 
A \emph{pure spin-c Killing spinor} on $(M,g)$ with \emph{real Killing constant} $\alpha \in \mathbb{R}^{\ast}$ is a pure section $\eta\in \Gamma(S)$ of a bundle of irreducible complex spinors $S = Q  \times_{\mathfrak{r}_{\ell}} \C^{2^n}$ on $(M,g)$ satisfying the following equation:
\begin{equation}
\label{eq:KillingSpinor}
\nabla^{g,A}_X \eta = i\frac{\alpha}{2} X\cdot \eta\, , \qquad X \in \Gamma(TM)
\end{equation}

\noindent
for a connection $A\in\Omega^1(P,i\R)$ on the characteristic $\U(1)$-bundle $P$ of $Q$.
\end{definition}

\begin{remark}
Note the factor $i$ in the definition of a Killing spinor. Given that most of the mathematical literature in spin geometry uses the \emph{minus convention} to define the Clifford algebra, the inclusion of the factor $i$ ensures that our notion of real spin-c Killing spinor matches that appearing in the literature.
\end{remark}

\begin{remark}
If the connection $A$ has trivial holonomy, the principal $\mathrm{U}(1)$-bundle $P$ admits a global parallel section. In this gauge, $A$ globally vanishes, the spin-c structure strictly reduces to a classical spin structure, and the covariant derivative $\nabla^{g,A}$ descends to the standard spin Levi-Civita connection $\nabla^g$. Consequently, the spin-c Killing equation \eqref{eq:KillingSpinor} globally reduces to the standard Killing spinor equation. Interestingly enough, if $A$ is flat but has non-trivial holonomy, which requires $M$ to be non simply connected, then Equation \eqref{eq:KillingSpinor} reduces locally to the standard Killing spinor equation, but not globally. In this flat but holonomy non-trivial case, a spin-c Killing spinor can be intuitively understood as a standard Killing spinor \emph{twisted} by $\mathrm{U}(1)$ monodromies.
\end{remark}

Associated with a given complex spinor $\eta\in \Gamma(S)$ on $(M,g)$, we can construct its complex-bilinear square $\rho$, obtained via the choice of the admissible complex-bilinear pairing $\scB$ on $S$. The general algebraic characterization of this complex-bilinear square was obtained in \cite[Thm.\ 4.12]{Algebraic_Complex_Square_2025} in terms of an explicit algebraic system of equations in the underlying truncated Kähler-Atiyah bundle. Focusing on the case of a pure irreducible complex spinor, its complex-bilinear square is algebraically well-known since Cartan \cite{Cartan1938}, see also \cite{Kopczynski1997}. Globalizing this algebraic characterization to a Riemannian manifold equipped with a spin-c structure, see \cite{Complex_Differential_Spinors_2026}, we obtain the following result.

\begin{lemma}
\label{lemma:squaresriemannian2n1}
Let $(M,g)$ be a $(2n+1)$-dimensional oriented Riemannian manifold, and let $S$ be a bundle of irreducible complex spinors associated to a given $\mathrm{Spin}^c(2n+1)$-structure $Q$ with characteristic line bundle $\mathcal{L}$. A $\mathcal{L}$-valued complex exterior form $\rho \in \Omega_{\mathbb{C}}(M,\mathcal{L})$ is the complex-bilinear square of a pure irreducible complex spinor $\eta\in\Gamma(S)$ if and only if it is a maximally decomposable $\mathcal{L}$-valued complex $n$-form whose components are isotropic and mutually orthogonal. That is, locally around every point in $M$, there exists a local section $\mathfrak{l}$ of $\mathcal{L}$ such that:
\begin{equation*}
\rho = (\theta^1\wedge \cdots \wedge \theta^n) \otimes \mathfrak{l}
\end{equation*}

\noindent
for a set of local isotropic and mutually orthogonal complex one-forms $\theta^1,\ldots,\theta^n \in \Omega^1_{\mathbb{C}}(M)$.
\end{lemma}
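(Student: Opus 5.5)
The plan is to reduce the statement to a pointwise algebraic claim and then globalize it using local trivializations of $Q$ and $\mathcal{L}$.

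\emph{Pointwise model.} Fix $p\in M$ and an oriented orthonormal frame $(e_1,\dots,e_{2n+1})$ of $T_pM$. Choose local sections of $Q$ and of $P$; these identify $S_p\cong\C^{2^n}$ with the irreducible $\C\mathrm{l}(2n+1)$-module $\gamma_\ell$, and $\mathcal{L}_p\cong\C$. Under these identifications $\scB$ becomes the standard admissible bilinear form on $\C^{2^n}$. Passing to a local trivialization of $\mathcal{L}$ shows that the $\mathcal{L}$-valued square is an ordinary complex form tensored with a local section $\mathfrak{l}$. Changing the trivialization multiplies $\mathfrak{l}$ by a $\U(1)$-valued function and $\eta$ by its square root, so the statement is compatible with gauge changes. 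Everything therefore reduces to linear algebra on $V=\R^{2n+1}$.

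\emph{Necessity.} Suppose $\eta$ is pure, with $W=\mathrm{Ann}(\eta)$ maximal isotropic of dimension $n$. Choose a Witt basis adapted to $W$: isotropic vectors $w_1,\dots,w_n$ spanning $W$, vectors $\bar w_1,\dots,\bar w_n$ with $g(w_a,\bar w_b)=\delta_{ab}$, and a unit vector $e_0$ orthogonal to both. The square is $\rho=\sum_k \scB(\eta,\gamma(e^{I})\eta)\,e^I$, up to normalization. The coefficients are computed from the identities $\scB(\eta, X\cdot\xi)=\pm\scB(X\cdot\eta,\xi)$; the sign is fixed by the admissibility type.
\begin{enumerate}
\item A coefficient pairing $\eta$ with $\eta$ via a Clifford monomial containing some $w_a$ vanishes, because $w_a$ can be moved onto $\eta$. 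By a Fierz-type argument, equivalently the invariant statement $X\lrcorner\rho=0$ or $X\wedge\rho=0$ for $X\in W$, this forces $\rho$ to lie in $\Lambda^\bullet W^\flat$ (or its annihilator dual). Here $W^\flat$ denotes the isotropic forms $\theta^a=w_a^\flat$.
\item The degree must be exactly $n$. The admissible pairing on odd-dimensional irreducible spinors has a fixed symmetry and type, so only degrees congruent to $n$ modulo 4 survive. Cartan's theorem, or directly the fact that $\eta$ is a lowest-weight vector for $\mathfrak{so}(V_\C)$ and $\rho$ spans the weight line $\Lambda^nW^\flat$, then gives $\rho=c\,\theta^1\wedge\cdots\wedge\theta^n$.
\item The constant $c$ is nonzero by non-degeneracy of $\scB$. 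It can be absorbed into $\mathfrak{l}$.
\end{enumerate}
Mutual orthogonality and isotropy of the $\theta^a$ follow because $W$ is totally isotropic.

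\emph{Sufficiency.} Suppose $\rho=(\theta^1\wedge\cdots\wedge\theta^n)\otimes\mathfrak{l}$ with the $\theta^a$ isotropic and mutually orthogonal, and necessarily linearly independent. Their duals span a maximal isotropic $W\subset T_pM\otimes\C$. A standard fact of Chevalley's theory is that $\{\xi: W\cdot\xi=0\}$ is a complex line in an irreducible module, spanned by a pure spinor $\eta_0$. By necessity, $\eta_0$ has square $c\,\theta^1\wedge\cdots\wedge\theta^n\otimes\mathfrak{l}_0$. Rescaling $\eta_0$ by a complex square root $\lambda$, chosen with $\lambda^2 c\,\mathfrak{l}_0=\mathfrak{l}$, produces $\rho$ exactly. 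Locally $W$, $\eta_0$, $\mathfrak{l}_0$ and $c$ vary smoothly, so a smooth local $\eta$ exists.

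\emph{Globalization and main obstacle.} Two sign ambiguities affect the global statement: the square-root choice, and the global lift, since $\eta$ and $-\eta$ have the same square. Here I would appeal to \cite[Thm.~4.12]{Algebraic_Complex_Square_2025} and the globalization in \cite{Complex_Differential_Spinors_2026}, which reconstruct a global spinor from a global form satisfying the algebraic system. Alternatively, one can note that the lemma's condition is local, so local existence suffices. The main obstacle is step 2, pinning the degree to exactly $n$ and excluding lower-degree contributions such as $e_0$-terms. I would handle it first by the weight argument under the torus stabilizing the Witt decomposition: $\eta$ has weight $-\tfrac12(1,\dots,1)$, and among forms built from $W^\flat$ and $e_0$ only $\Lambda^nW^\flat$ carries weight $-(1,\dots,1)$.
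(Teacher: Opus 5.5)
Your route is the paper's, made explicit. The paper gives no argument beyond citing Cartan and Kopczy\'nski for the pointwise statement and \cite{Complex_Differential_Spinors_2026} for the globalization, and your torus-weight computation is a sound proof of the pointwise part. In fact the weight argument does all the work by itself. In your convention, the weight space of weight $-(1,\dots,1)$ in $\Lambda^{\le n}(T^*_pM\otimes\C)$ is exactly $\Lambda^nW^\flat$: a Witt monomial of that weight must contain every $w_a^\flat$ and no $\bar w_a^\flat$, and an extra $e_0^\flat$ would push the degree to $n+1$.

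Two auxiliary claims should be corrected, though both are harmless given the weight argument. First, the symmetry/type argument does not single out $k\equiv n \pmod 4$. The coefficient $\scB(\eta,e_I\cdot\eta)$ with $|I|=k$ can be nonzero only if $s^k(-1)^{\binom{k}{2}}\sigma=1$, and this holds on two residue classes ($k\equiv n,n+1 \pmod 4$ for $s=(-1)^n$). Second, item 1 only gives $\rho\in\Lambda^\bullet(W^\flat\oplus\C e_0^\flat)$, since the annihilator of $W$ is $(W^\perp)^\flat$.

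The real problem is your fallback that ``the lemma's condition is local, so local existence suffices''. The \emph{if} direction asserts a global $\eta\in\Gamma(S)$. Local pure spinors with square $\rho$ are unique only up to sign: they lie in the line $\{\chi : W\cdot\chi=0\}$, and your square-root ambiguity is the same $\pm\eta$ ambiguity. They glue precisely when the resulting $\mathbb{Z}_2$-cocycle, i.e.\ a real line bundle $\mathcal{K}$, is trivial. Otherwise you only get a global pure section of $S\otimes\mathcal{K}$, which is the spinor bundle of a different spin-c structure with characteristic bundle $\mathcal{L}\otimes\mathcal{K}^{2}\cong\mathcal{L}$.

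For example, take $T^3$ with its trivial spin structure and a constant spinor $\chi_0$. Then $e^{2\pi i t}$ times the square of $\chi_0$ is a Cartan one-form. It is not the square of any section of $S$, because its local square roots are $\pm e^{\pi i t}\chi_0$, which flip sign around the circle. So globalization is not automatic for a fixed $S$: it needs the freedom to replace $Q$ by its twist by $\mathcal{K}$. That freedom is available in the way the lemma is used in Proposition \ref{prop:KillingSpinorBilinear}, where only some spin-c structure is required. This, rather than your Step 2, is the genuinely non-classical point of the lemma, and it must be handled by the cited reconstruction result, not by a locality remark.
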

 
Based on the previous lemma, we introduce the following definition. 
 
\begin{definition}
Let $(M,g)$ be a Riemannian manifold of dimension $d = 2n+1$ equipped with a Hermitian complex line bundle $\cL$. A \emph{Cartan $n$-form} on $(M,g,\cL)$ is a maximally decomposable $\mathcal{L}$-valued complex $n$-form whose components are isotropic and mutually orthogonal, as explained in Lemma \ref{lemma:squaresriemannian2n1}.
\end{definition}

Note that the spinor $\eta\in\Gamma(S)$ vanishes at a point $p\in M$ if and only if its complex-bilinear square $\rho$ also vanishes at $p$. By mapping the Killing spinor equation \eqref{eq:KillingSpinor} through the dequantization isomorphism $(\Psi_\ell^<)^{-1} \colon \mathrm{End}(S) \to \wedge^< T^*_\mathbb{C} M$ onto the truncated Kähler-Atiyah algebra $(\wedge^< T^*_{\mathbb{C}}M, \vee)$ of $(M,g)$, see \cite{Algebraic_Complex_Square_2025,Complex_Differential_Spinors_2026}, we translate the spinorial framework into an equivalent exterior system for $\rho$.\medskip

For later convenience, we will fix from now on $\ell=(-1)^{\binom{n-1}{2}+1}\in\{\pm1\}$.

\begin{prop}
\label{prop:KillingSpinorBilinear}
A $(2n+1)$-dimensional oriented and spin-c Riemannian manifold $(M,g)$ admits a pure spin-c Killing spinor with real Killing constant $\alpha \in \mathbb{R}^{\ast}$ if and only if it admits a Hermitian complex line bundle $\cL$ and a Cartan $n$-form $\rho \in \Omega^n_{\mathbb{C}}(M, \mathcal{L})$ satisfying the differential equation:
\begin{equation*}
\nabla^{g,A}_X \rho = i^{n+1}\alpha \ast (X^\flat \wedge \rho)
\end{equation*}

\noindent
for every vector field $X \in \Gamma(TM)$.
\end{prop}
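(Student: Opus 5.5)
The proof splits into two directions, and both rest on the dequantization isomorphism $(\Psi_\ell^<)^{-1}$ together with Lemma \ref{lemma:squaresriemannian2n1}.

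\emph{Forward direction.} Let $\eta$ be a pure spin-c Killing spinor and set $\rho$ to be its complex-bilinear square, taken with respect to the admissible pairing $\scB$ and valued in the characteristic bundle $\cL$. By Lemma \ref{lemma:squaresriemannian2n1}, $\rho$ is a Cartan $n$-form; purity is what guarantees that only the degree-$n$ component survives. Since $\scB$ is parallel for $\nabla^{g,A}$ and the induced connection on $\cL$, the Leibniz rule gives
\begin{equation*}
\nabla^{g,A}_X(\eta\otimes\eta) = i\tfrac{\alpha}{2}\bigl(X\cdot\eta\otimes\eta + \eta\otimes X\cdot\eta\bigr).
\end{equation*}
Under dequantization, $X\cdot\eta\otimes\eta$ corresponds to $X^\flat\vee\rho$. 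The second term corresponds to $\pm\,\rho\vee X^\flat$, with the sign fixed by the symmetry type of $\scB$ (its adjoint of Clifford multiplication by vectors). So
\begin{equation*}
\nabla_X\rho = i\tfrac{\alpha}{2}\bigl(X^\flat\vee\rho \pm \rho\vee X^\flat\bigr).
\end{equation*}
Expanding $X^\flat\vee\rho = X^\flat\wedge\rho + \iota_X\rho$ and $\rho\vee X^\flat = (-1)^n(X^\flat\wedge\rho - \iota_X\rho)$, one of the two pieces cancels. Which one cancels depends on $n$ and on the type of $\scB$, so a parity bookkeeping is needed here.

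In the truncated algebra $\wedge^<$, a form of degree above $n$ is identified with its Hodge dual through the volume-form relation $\gamma_\ell(\nu)=\ell$. Thus the $(n+1)$-form $X^\flat\wedge\rho$, or the $(n-1)$-form $\iota_X\rho$, must be rewritten using $\ast$. The choice $\ell=(-1)^{\binom{n-1}{2}+1}$ is made exactly so that the surviving term becomes $i^{n+1}\alpha\ast(X^\flat\wedge\rho)$. The main obstacle is this sign and phase computation: matching the Clifford volume element with $\ast$ in the truncated Kähler–Atiyah product and producing the factor $i^{n+1}$. I would check it first in the explicit cases $n=1,2$ and then argue for general $n$ by the parity pattern. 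A consistency check is useful here: the right-hand side should be an $n$-form, since $\ast$ of an $(n+1)$-form in dimension $2n+1$ has degree $n$, matching $\nabla_X\rho$.

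\emph{Converse direction.} Given $\cL$ and a Cartan $n$-form $\rho$ satisfying the equation, I would apply \cite[Thm.\ 5.10]{Complex_Differential_Spinors_2026}. That theorem says an $\cL$-valued form which satisfies the algebraic square conditions of \cite[Thm.\ 4.12]{Algebraic_Complex_Square_2025}, together with the dequantized Killing equation, is globally the square of an irreducible complex Killing spinor for some spin-c structure with characteristic bundle $\cL$ and connection $A$.

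The algebraic conditions follow from Lemma \ref{lemma:squaresriemannian2n1}, because Cartan $n$-forms are precisely squares of pure spinors. The spinor $\eta$ so obtained is nowhere vanishing, since $\rho$ is nowhere zero as a decomposable form. It is pure at each point by the same lemma, and the differential equation for $\rho$ is equivalent, under the computation of the forward direction run backwards, to $\eta$ satisfying \eqref{eq:KillingSpinor} with constant $\alpha$.

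The only subtlety in this direction is the sign ambiguity $\eta\mapsto-\eta$ in recovering $\eta$ from $\rho$. Equation \eqref{eq:KillingSpinor} is linear in $\eta$, so this sign does not affect the Killing property; the global construction of the spinor itself is handled by the cited theorem.
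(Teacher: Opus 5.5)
Your route is the paper's: Lemma~\ref{lemma:squaresriemannian2n1} identifies squares of pure spinors with Cartan $n$-forms, and \cite[Thm.\ 5.10]{Complex_Differential_Spinors_2026} (which you rightly invoke for the global reconstruction in the converse) makes the Killing equation equivalent to $\nabla^{g,A}_X\rho=i\frac{\alpha}{2}(X^\flat\vee\rho+s\,\rho\vee X^\flat)$, where $s$ is the adjoint type of $\scB$. Beyond these citations, the proposition's only content is the evaluation of this right-hand side. That is exactly the step you leave open, and your description of it is inaccurate. The expansions you write are those of the geometric product $\diamond$, not of $\vee$. The $(n-1)$-form $\iota_X\rho$ already lies in $\wedge^<$ and is never dualized. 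And which piece cancels does not depend on $n$.

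What closes the step is that $s$ is not a free sign. In odd dimension $e_1\cdots e_{2n+1}$ is central and acts on the irreducible module by a nonzero scalar, while
\begin{equation*}
\scB(e_1\cdots e_{2n+1}\cdot\chi,\eta)=s^{2n+1}(-1)^{\binom{2n+1}{2}}\scB(\chi,e_1\cdots e_{2n+1}\cdot\eta)=s(-1)^n\scB(\chi,e_1\cdots e_{2n+1}\cdot\eta).
\end{equation*}
Hence $s=(-1)^n$, which is the paper's choice (positive type for $n$ even, negative for $n$ odd). Now use $\rho_1\vee\rho_2=\mathcal{P}_<(\rho_1\diamond\rho_2+i^n\ell\ast\tau(\rho_1\diamond\rho_2))$ and count degrees. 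For any $n$-form $\rho$,
\begin{equation*}
X^\flat\vee\rho=\iota_X\rho+c\ast(X^\flat\wedge\rho),\qquad \rho\vee X^\flat=(-1)^n\bigl(-\iota_X\rho+c\ast(X^\flat\wedge\rho)\bigr),\qquad c:=i^n\ell(-1)^{\binom{n+1}{2}}.
\end{equation*}
So $X^\flat\vee\rho+s\,\rho\vee X^\flat=2c\ast(X^\flat\wedge\rho)$ for every $n$: the $\iota_X\rho$ terms always cancel. This recovers the paper's $2\ell\ast(X^\flat\wedge\rho)$ for $n$ even and $-2i\ell\ast(X^\flat\wedge\rho)$ for $n$ odd. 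Since $\binom{n+1}{2}-\binom{n-1}{2}=2n-1$, the choice of $\ell$ gives $\ell(-1)^{\binom{n+1}{2}}=1$. Thus $c=i^n$, and the equation becomes $\nabla^{g,A}_X\rho=i^{n+1}\alpha\ast(X^\flat\wedge\rho)$ uniformly in $n$, with no case checks for $n=1,2$. Because this is an identity valid for all $n$-forms, it serves both directions at once, and the rest of your argument goes through as written.
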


\begin{proof}
Let $(M,g)$ be a $(2n+1)$-dimensional oriented and Riemannian manifold, equipped with a bundle of irreducible complex spinors $(S,\gamma_\ell)$. By \cite{Algebraic_Complex_Square_2025}, we can equip $(S,\gamma_\ell)$ with an admissible complex-bilinear pairing $\scB$ of positive adjoint type if $n$ is even, and negative adjoint type if $n$ is odd. As explained in \cite{LBC13,LBC16} and further elaborated in \cite{Algebraic_Complex_Square_2025}, the truncated Kähler-Atiyah bundle is defined on:
\begin{equation*}
\wedge^<T^*_\C M=\bigoplus_{k=0}^n\wedge^kT^*_\C M
\end{equation*}

\noindent
endowed with the product:
\begin{equation*}
\rho_1\vee\rho_2=\mathcal{P}_<(\rho_1\diamond\rho_2+i^n\ell*\tau(\rho_1\diamond\rho_2))
\end{equation*}

\noindent
for all $\rho_1,\rho_2\in\wedge^<T^*_\C M$ and where we have fixed $\ell=(-1)^{\binom{n-1}{2}+1}\in\{\pm1\}$. Here $\mathcal{P}_<\colon\wedge T^*_\C M\to\wedge^<T^*_\C M$ is the natural projection, $\diamond$ denotes the geometric product and $\tau$ is the reversion anti-automorphism that acts by $\smash{\tau(\beta)=(-1)^{\binom{k}{2}}\beta}$ on $k$-forms. By \cite[Thm.\ 5.10]{Complex_Differential_Spinors_2026}, the equation $\nabla^{g,A}_X \eta = i\frac{\alpha}{2} X\cdot \eta$ is equivalent to:
\begin{equation*}
\nabla^{g,A}_X\rho=i\frac{\alpha}{2}(X^\flat\vee\rho+s\rho\vee X^\flat),
\end{equation*}

\noindent
where $s\in\{\pm1\}$ is the adjoint type of $\scB$. We have: \begin{itemize}
    \item If $n$ is even, then $s=1$ and we get $X^\flat\vee\rho+\rho\vee X^\flat=2\ell*(X^\flat\wedge\rho)$.
    \item If $n$ is odd, then $s=-1$ and we get $X^\flat\vee\rho-\rho\vee X^\flat=-2i\ell*(X^\flat\wedge\rho)$.
\end{itemize}

Therefore, we obtain the result from the statement.
\end{proof}

% % % % % % % % % % % % % % % % % % % % % % % % % % % % % % % % % % % % % %
% % % % % % % % % % % % % % % % % % % % % % % % % % % % % % % % % % % % % %

\section{The \emph{only if} direction}
\label{sec:onlyif}

% % % % % % % % % % % % % % % % % % % % % % % % % % % % % % % % % % % % % %
% % % % % % % % % % % % % % % % % % % % % % % % % % % % % % % % % % % % % %

Assume that the $(2n+1)$-dimensional oriented Riemannian manifold $(M,g)$ admits a pure spin-c Killing spinor $\eta \in \Gamma(S)$ with real Killing constant $\alpha\in\mathbb{R}^{\ast}$. Let $\rho \in \Omega^n_{\mathbb{C}}(M, \mathcal{L})$ be its corresponding complex-bilinear square. We will construct a canonical $\alpha$-Sasakian structure on $(M,g)$ directly from $\rho$ via certain natural contractions and exterior products.\medskip

First, we construct the contact one-form, whose dual we will identify with the Reeb vector field of the $\alpha$-Sasakian structure. Taking the exterior product of $\rho$ with its complex conjugate $\overline{\rho}$, the line bundle factors drop out under the canonical pairing $\mathcal{L} \otimes \overline{{\mathcal{L}}}\cong\mathcal{L}\otimes\mathcal{L}^\ast\cong\mathbb{C}$, yielding a globally well-defined real $2n$-form $i^n\rho \wedge \overline{\rho} \in \Omega^{2n}(M)$. Applying the Hodge star operator, we define the contact one-form $\zeta \in \Omega^1(M)$ by:
\begin{equation*}
\zeta := \frac{i^n*(\rho\wedge\overline{\rho})}{\escal{\rho,\overline{\rho}}},
\end{equation*}

\noindent
where $\langle \rho, \overline{\rho} \rangle \in C^{\infty}(M)$ is the inner product defined by $g$. The dual $\xi := \zeta^\sharp$ of $\zeta$ is a nowhere vanishing vector field on $M$ of unit length, since $\rho$ is nowhere vanishing and:
\begin{equation*}
\escal{*(\rho\wedge\overline{\rho}),*(\rho\wedge\overline{\rho})}=(-1)^n\escal{\rho,\overline{\rho}}^2.
\end{equation*}

\noindent
Next, we construct the fundamental two-form $\omega \in \Omega^2(M)$ out of $\rho$. Given $\rho\in \Omega^n_{\mathbb{C}}(M,\cL)$, define:
\begin{equation*}
\rho \triangle_{n-1} \overline{\rho} := \frac{1}{(n-1)!} \rho(e_{i_1},\hdots , e_{i_{n-1}}) \wedge \overline{\rho}(e_{i_1},\hdots , e_{i_{n-1}})\in \Omega^2_{\mathbb{C}}(M)
\end{equation*}

\noindent
in terms of any orthonormal frame $\{e_1,\hdots , e_{d}\}$ of $(M,g)$. We set:
\begin{equation*}
i\omega := \frac{\rho\triangle_{n-1}\overline{\rho}}{\escal{\rho,\overline{\rho}}}.
\end{equation*}
Hence, $\omega \in \Omega^2(M)$ is a real two-form on $M$.

\begin{lemma}\label{lemma:nabla_zeta}
The contact one-form $\zeta\in\Omega^1(M)$ satisfies $\nabla^g\zeta = -\alpha\omega$. In particular, the Reeb vector field $\xi$ is Killing.
\end{lemma}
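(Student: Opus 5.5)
The plan is to differentiate the defining formula for $\zeta$ and feed in the differential equation of Proposition \ref{prop:KillingSpinorBilinear}. Everything then reduces to pointwise algebra on a Cartan $n$-form, carried out in an adapted frame.

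First I would note that $\escal{\rho,\overline{\rho}}$ is constant (or at least that its derivative drops out). Indeed,
\begin{equation*}
X\escal{\rho,\overline{\rho}} = \escal{\nabla^{g,A}_X\rho,\overline{\rho}}+\escal{\rho,\overline{\nabla^{g,A}_X\rho}},
\end{equation*}
because the pairing $\mathcal{L}\otimes\overline{\mathcal{L}}\to\mathbb{C}$ is parallel for the unitary connection $A$. Substituting $\nabla^{g,A}_X\rho = i^{n+1}\alpha\ast(X^\flat\wedge\rho)$, each term has the form $\escal{\ast(X^\flat\wedge\rho),\overline{\rho}}$, which is proportional to $\ast(X^\flat\wedge\rho\wedge\overline{\rho})$. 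The two terms carry the phases $i^{n+1}$ and $\overline{i^{n+1}}$, and together with the reality of $i^n\rho\wedge\overline\rho$ they should cancel. Hence $\escal{\rho,\overline\rho}$ is constant. (If the cancellation fails, the quotient rule still applies and I would check that the resulting term is also killed.)

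Next, again using that $A$ is unitary, I would compute
\begin{equation*}
\nabla^g_X(\rho\wedge\overline\rho) = \nabla^{g,A}_X\rho\wedge\overline\rho+\rho\wedge\overline{\nabla^{g,A}_X\rho}.
\end{equation*}
Then $\nabla_X\zeta$ equals $i^n$ times the Hodge star of this expression, divided by the constant $\escal{\rho,\overline\rho}$. The key step is to evaluate this pointwise. At a point I choose an orthonormal frame $e_0,e_1,\dots,e_{2n}$ such that
\begin{equation*}
\theta^j = e^{2j-1}+ie^{2j}, \qquad \rho = c\,\theta^1\wedge\cdots\wedge\theta^n\otimes\mathfrak l .
\end{equation*}
Since $\rho\wedge\overline\rho$ is, up to a constant, $i^n\,e^{12\cdots 2n}$, this gives $\zeta=\pm e^0$. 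In the same frame, $\rho\triangle_{n-1}\overline\rho$ is proportional to $\sum_j\theta^j\wedge\overline{\theta}^j$, so $\omega$ is, up to sign, the standard form $\sum_j e^{2j-1}\wedge e^{2j}$.

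With this frame I then compute $\ast(X^\flat\wedge\rho)\wedge\overline\rho$ separately for $X=e_0$ and for $X$ in the span of the $e_k$. For $X=e_0$, the form $\ast(e^0\wedge\rho)$ is proportional to $\rho$ or $\overline\rho$ (the standard duality for $(n,0)$-forms on $\xi^\perp$), and the wedge contributes nothing to $\ast$ of degree one, giving $\nabla_\xi\zeta=0$. For $X$ in $\xi^\perp$, $\ast(X^\flat\wedge\rho)$ is an $(n-1)$-form containing $e^0$ wedged with a contraction of $\rho$, and wedging with $\overline\rho$ followed by $\ast$ yields a one-form proportional to $JX$. This gives $\nabla_X\zeta=-\alpha\,\omega(X,\cdot)$.

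The main obstacle is keeping track of the signs and powers of $i$: the choice of $\ell$, the conventions for $\ast$ and $\triangle_{n-1}$, and the normalization that makes the constant exactly $-\alpha$. To avoid errors I would first check the computation for $n=1$ in dimension $3$ to fix the conventions, and then do the general case by induction, or by factoring $\ast$ across the splitting $\langle e_0\rangle\oplus\xi^\perp$. Finally, since $\omega$ is a $2$-form, $\nabla\zeta$ is skew-symmetric. Hence $\mathcal{L}_\xi g = 2\,\mathrm{Sym}(\nabla\zeta)=0$, so $\xi$ is Killing.
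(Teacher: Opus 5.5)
Your first two steps coincide with the paper's proof. The constancy of $\langle\rho,\overline\rho\rangle$ holds with no fallback needed: since $\langle *(X^\flat\wedge\rho),\overline\rho\rangle = *(X^\flat\wedge\rho\wedge\overline\rho)$, you get $X\langle\rho,\overline\rho\rangle = 2\,\mathrm{Re}\big(i\alpha\, *(X^\flat\wedge i^n\rho\wedge\overline\rho)\big)=0$. The Leibniz formula for $\nabla^g_X(\rho\wedge\overline\rho)$ is also the paper's.

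\textbf{The gap is in the evaluation step, which is the entire content of the lemma.} You assert that it ``gives $\nabla_X\zeta=-\alpha\,\omega(X,\cdot)$'', but you leave $\zeta=\pm e^0$ and $\omega$ ``up to sign'' undetermined. These orientation-dependent signs are exactly what decide between $-\alpha$ and $+\alpha$, so the identity is not established.

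Moreover, the one sub-step you do argue fails as stated. With the complex-linear Hodge star, $*(\zeta\wedge\rho)=\lambda\rho$ with $\lambda\in\{\pm i^n\}$, never a multiple of $\overline\rho$. Hence $*\big(*(\xi^\flat\wedge\rho)\wedge\overline\rho\big)$ is a \emph{nonzero} multiple of $\zeta$. The vanishing of $\nabla^g_\xi\zeta$ comes only from cancellation against the conjugate term, because $\nabla^{g,A}_\xi\rho=\mu\rho$ with $\mu=i^{n+1}\alpha\lambda\in i\mathbb{R}$. This is the same phase mechanism as in your first step. Alternatively, $|\zeta|=1$ forces $\nabla_X\zeta\perp\zeta$.

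For horizontal $X$ there are two further slips. The form $*(X^\flat\wedge\rho)=(-1)^n\iota_X(*\rho)$ has degree $n$, not $n-1$. Also, a single term produces a combination of $X^\flat$ and $i\,\iota_X\omega$, not a multiple of $JX$; the $X^\flat$ part drops out only after subtracting the conjugate term.

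The paper closes this step with a frame-free identity that treats all $X$ at once and makes the constant automatic. With $c_n=i^{n+1}\alpha$ and $\overline{c}_n=(-1)^{n+1}c_n$, one has $\nabla^g_X(\rho\wedge\overline\rho)=c_n\big(*(X^\flat\wedge\rho)\wedge\overline\rho-*(X^\flat\wedge\overline\rho)\wedge\rho\big)$, and
\[
\iota_Y *\big(*(X^\flat\wedge\rho)\wedge\overline\rho\big)=(-1)^n\langle X^\flat\wedge\rho,Y^\flat\wedge\overline\rho\rangle=(-1)^n\big(g(X,Y)\langle\rho,\overline\rho\rangle-\langle\iota_X\overline\rho,\iota_Y\rho\rangle\big).
\]
Subtracting the complex conjugate kills the real $g(X,Y)$ term and leaves $(-1)^n(\rho\triangle_{n-1}\overline\rho)(X,Y)$. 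Hence $(\nabla^g_X\zeta)(Y)=i^n c_n(-1)^n\, i\,\omega(X,Y)=-\alpha\,\omega(X,Y)$.

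Your adapted-frame computation can be made to work. You would need to fix consistently the orientation of $(e_0,\dots,e_{2n})$, the sign of $\zeta$ relative to $e^0$, and the eigenvalue of $*$ on $\rho$. The identity above avoids all of that bookkeeping. Your concluding Killing argument from skew-symmetry is fine.
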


\begin{proof}
Let $c_n:=i^{n+1}\alpha$. Then, by Proposition \ref{prop:KillingSpinorBilinear}, we have $\nabla^{g,A}_X\rho=c_n*(X^\flat\wedge\rho)$ for all $X\in\Gamma(TM)$, thus:
\begin{equation*}
X\escal{\rho,\overline{\rho}}=c_n\escal{*(X^\flat\wedge\rho),\overline{\rho}}+\overline{c}_n\escal{\rho,*(X^\flat\wedge\overline{\rho})}.
\end{equation*}

\noindent
Note that:
\begin{equation*}
\escal{*(X^\flat\wedge\rho),\overline{\rho}}=\escal{X^\flat\wedge\rho,*\overline{\rho}}=\escal{\rho,\iota_X(*\overline{\rho})}=\escal{\rho,*(\overline{\rho}\wedge X^\flat)}=(-1)^n\escal{\rho,*(X^\flat\wedge\overline{\rho})}.
\end{equation*}

\noindent
Hence $X\escal{\rho,\overline{\rho}}=((-1)^nc_n+\overline{c}_n)\escal{\rho,*(X^\flat\wedge\overline{\rho})}=0$ for all $n\in\N$ since $\overline{c}_n=(-1)^{n+1}c_n$. Using this we obtain:
\begin{equation*}
\nabla^{g}_X\zeta=c_n\frac{i^n}{\escal{\rho,\overline{\rho}}}*\big(*(X^\flat\wedge\rho)\wedge\overline{\rho} - *(X^\flat\wedge\overline{\rho})\wedge\rho\big).
\end{equation*}

\noindent
Now we compute: \begin{align*}
    T(X,Y)&:=\iota_Y(*(*(X^\flat\wedge\rho)\wedge\overline{\rho}))=*(*(X^\flat\wedge\rho)\wedge\overline{\rho}\wedge Y^\flat)\\
    &{\phantom{:}}=(-1)^n*(Y^\flat\wedge\overline{\rho}\wedge*(X^\flat\wedge\rho))=(-1)^n*\escal{Y^\flat\wedge\overline{\rho},X^\flat\wedge\rho}\nu\\
    &{\phantom{:}}=(-1)^n\escal{X^\flat\wedge\rho,Y^\flat\wedge\overline{\rho}}=(-1)^n\big(g(X,Y)\escal{\rho,\overline{\rho}}-\escal{\iota_X\overline{\rho},\iota_Y\rho}\big),
\end{align*}

\noindent
thus $T(X,Y)-\overline{T(X,Y)}=(-1)^n(\escal{\iota_X\rho,\iota_Y\overline{\rho}}-\escal{\iota_X\overline{\rho},\iota_Y\rho})$. On the other hand, it can be seen that: $$(\rho\triangle_{n-1}\overline{\rho})(X,Y)=\escal{\iota_X\rho,\iota_Y\overline{\rho}}-\escal{\iota_X\overline{\rho},\iota_Y\rho}.$$

\noindent
Therefore, we obtain the formula in the statement.\medskip

From the formula for $\nabla^g\zeta$ we deduce that $\mathscr{L}_{\xi}g = \mathrm{Sym}(\nabla^g\zeta)=0$, hence $\xi$ is a Killing vector field.
\end{proof}

We define the fundamental $(1,1)$-tensor field $\varphi\in\Gamma(\mathrm{End}(TM))$ by:
\begin{equation*}
\varphi(X) := -\frac{1}{\alpha}\nabla_X^g\xi,
\end{equation*}

\noindent
which satisfies $g(\varphi(X),Y)=\omega(X,Y)$ for all $X,Y\in\Gamma(TM)$ and then $g(\varphi(X),Y)=-g(X,\varphi(Y))$.

\begin{lemma}\label{lemma:varphi_square}
The fundamental endomorphism $\varphi\in\Gamma(\End(TM))$ satisfies $\varphi^2=-\Id+\zeta\otimes\xi$.
\end{lemma}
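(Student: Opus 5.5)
The identity $\varphi^2=-\Id+\zeta\otimes\xi$ is pointwise and algebraic. Since $g(\varphi X,Y)=\omega(X,Y)$, I will prove it by computing $\omega$ at a point in a frame adapted to the Cartan $n$-form $\rho$, using only the description of $\rho$ in Lemma \ref{lemma:squaresriemannian2n1}. Fix $p\in M$ and write $\rho_p=(\theta^1\wedge\cdots\wedge\theta^n)\otimes\mathfrak{l}$ with $\theta^j$ isotropic and mutually orthogonal. The pairing $\theta\mapsto g(\theta,\overline{\cdot})$ is Hermitian positive definite, so $\mathrm{span}\{\theta^j\}$ meets its conjugate trivially.

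\textbf{Adapted frame.} Applying Gram--Schmidt to the $\theta^j$ with respect to the Hermitian product, I obtain a new basis of the same isotropic subspace. This changes $\rho_p$ only by a nonzero scalar, which can be absorbed into $\mathfrak{l}$. I may therefore assume $\theta^j=e^{2j-1}+ie^{2j}$ for an oriented orthonormal coframe $e^1,\dots,e^{2n+1}$, where $e^{2n+1}$ spans the real orthogonal complement of $\mathrm{span}\{\theta^j,\overline{\theta}^j\}$. Both $\zeta$ and $\omega$ are invariant under rescaling $\rho$ by a nonzero complex function, because they are quotients by $\langle\rho,\overline{\rho}\rangle$ and the $\mathcal{L}\otimes\overline{\mathcal{L}}$ factors cancel. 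This justifies the normalization.

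\textbf{Computing $\zeta$ and $\omega$.} In this frame, $\theta\wedge\overline{\theta}=-2i\,e^{2j-1}\wedge e^{2j}$ for each $\theta=\theta^j$. Hence $\rho\wedge\overline{\rho}$ is a constant multiple of $e^{1}\wedge\cdots\wedge e^{2n}$, and $\ast(\rho\wedge\overline{\rho})$ is proportional to $e^{2n+1}$. Since $\xi$ has unit length, it follows that $\zeta=\pm e^{2n+1}$; only the spanned line matters for what follows.

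For $\omega$, I use the formula already established in the proof of Lemma \ref{lemma:nabla_zeta}:
\[
i\,\omega(X,Y)\,\langle\rho,\overline{\rho}\rangle=\langle\iota_X\rho,\iota_Y\overline{\rho}\rangle-\langle\iota_X\overline{\rho},\iota_Y\rho\rangle .
\]
If $X$ or $Y$ equals $e_{2n+1}$, the contraction vanishes, so $\iota_\xi\omega=0$. Using the dual vectors $\theta_j^\sharp$ and their conjugates, I then evaluate the contractions explicitly. The result will be $\omega=\pm\sum_{j}e^{2j-1}\wedge e^{2j}$, possibly up to a sign fixed by the conventions, which does not affect $\varphi^2$.

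\textbf{Conclusion.} This $\omega$ makes $\varphi$ the standard complex structure $e_{2j-1}\mapsto\pm e_{2j}$, $e_{2j}\mapsto\mp e_{2j-1}$ on $\xi^\perp$, with $\varphi\xi=0$. Therefore $\varphi^2=-\Id$ on $\xi^\perp$ and $\varphi^2\xi=0$, which is exactly $\varphi^2=-\Id+\zeta\otimes\xi$.

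The main obstacle is the contraction computation in the Conclusion's input, together with the normalization by $\langle\rho,\overline{\rho}\rangle$. I must check that the coefficient of each $e^{2j-1}\wedge e^{2j}$ has modulus exactly one and the same value for every $j$, since a mismatch would spoil $\varphi^2=-\Id$. I will do this by verifying $n=1$ and $n=2$ explicitly, then using the symmetry permuting the index pairs $(2j-1,2j)$.
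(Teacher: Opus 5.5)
Your proposal is correct and follows essentially the paper's proof: pass to an orthonormal coframe with $\rho_p\propto(e^1+ie^2)\wedge\cdots\wedge(e^{2n-1}+ie^{2n})$, observe $\iota_\xi\rho=0$, compute $\omega=-\sum_j e^{2j-1}\wedge e^{2j}$, and read off $\varphi$. The only differences are that your Gram--Schmidt step makes explicit the adapted-frame choice the paper takes for granted, and that the paper first derives $\iota_\xi\rho=0$ from $\iota_\xi(\rho\wedge\overline{\rho})=0$ and then puts $\xi$ last in the frame, whereas you read it off the frame.

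One caution about your planned coefficient check. Verifying $n=1,2$ and permuting index pairs would not by itself fix the coefficient for general $n$, nor rule out cross terms. You do not need it, since
\[
\langle\theta^1\wedge\cdots\widehat{\theta^j}\cdots\wedge\theta^n,\overline{\theta}^1\wedge\cdots\widehat{\overline{\theta}^k}\cdots\wedge\overline{\theta}^n\rangle=2^{n-1}\delta_{jk}
\]
gives directly $\rho\triangle_{n-1}\overline{\rho}=2^{n-1}|\mathfrak{l}|^2\sum_j\theta^j\wedge\overline{\theta}^j$ and $\langle\rho,\overline{\rho}\rangle=2^n|\mathfrak{l}|^2$ for every $n$, hence $\omega=-\sum_j e^{2j-1}\wedge e^{2j}$.
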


\begin{proof}
We first show that $\iota_\xi\rho=0$. We compute:
\begin{equation*}
\iota_\xi(\rho\wedge\overline{\rho})=\iota_\xi(*(*(\rho\wedge\overline{\rho})))=*(*(\rho\wedge\overline{\rho})\wedge\zeta)=i^{-n}\escal{\rho,\overline{\rho}}*(\zeta\wedge\zeta)=0.
\end{equation*}

\noindent
This implies that $\iota_\xi\rho=\iota_\xi\overline{\rho}=0$ since $\rho\wedge\overline{\rho}$ is maximally decomposable.\medskip

Now let $\{e_1,\ldots,e_{2n},\xi\}$ be an orthonormal frame of $(M,g)$ and denote by $\{e^1,\ldots,e^{2n},\zeta\}$ the dual coframe. Set $\theta^j:=e^{2j-1}+ie^{2j}$ for $j=1,\ldots,n$. Then $\rho=(\theta^1 \wedge \cdots \wedge \theta^n)\otimes\mathfrak{l}$ for a local unit section $\mathfrak{l}$ of $\mathcal{L}$. Since $\escal{\theta^j,\theta^k}=0$ and $\escal{\theta^j,\bar{\theta}^k}=2\delta_{jk}$ for all $j,k$, we have $\escal{\rho,\overline{\rho}}=2^n$. Using the definition of $\rho \triangle_{n-1}\overline{\rho}$ we obtain:
\begin{equation*}
\rho\triangle_{n-1}\overline{\rho}=2^{n-1}\sum_{j=1}^n\theta^j\wedge\bar{\theta}^j .
\end{equation*}

\noindent
On the other hand, applying the definition of $\omega$ yields:
\begin{equation*}
\omega=-i\frac{\rho\triangle_{n-1}\overline{\rho}}{\escal{\rho,\overline{\rho}}}=-\frac{i}{2}\sum_{j=1}^n\theta^j\wedge\bar{\theta}^j=-\sum_{j=1}^n e^{2j-1}\wedge e^{2j}.
\end{equation*}

\noindent
From $\varphi(X)=(\iota_X\omega)^\sharp$ we obtain $\varphi(e_{2j-1})=-e_{2j}$, $\varphi(e_{2j})=e_{2j-1}$, and $\varphi(\xi)=0$. Hence $\varphi^2(e_{2j-1})=-e_{2j-1}$, $\varphi^2(e_{2j})=-e_{2j}$, and $\varphi^2(\xi)=0$. Therefore, if $X=X_{\mathcal{H}}+\zeta(X)\xi$ with $X_{\mathcal{H}}\in\mathcal{H}:=\ker(\zeta)$, then we obtain:
\begin{equation*}
\varphi^2(X)=-X_{\mathcal{H}}=-X+\zeta(X)\xi,
\end{equation*}

\noindent
which proves the claim from the statement.
\end{proof}

\begin{lemma}\label{lemma:nabla_omega}
The fundamental two-form $\omega\in\Omega^2(M)$ satisfies $\nabla^g_X\omega=\alpha X^\flat\wedge\zeta$.
\end{lemma}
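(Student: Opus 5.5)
Differentiating the defining expression for $\omega$ would mirror the proof of Lemma \ref{lemma:nabla_zeta}. Recall $i\omega=\rho\triangle_{n-1}\overline{\rho}/\escal{\rho,\overline{\rho}}$, and that Lemma \ref{lemma:nabla_zeta} showed $\escal{\rho,\overline{\rho}}$ is constant. Set $c_n=i^{n+1}\alpha$, so that $\overline{c}_n=(-1)^{n+1}c_n$. The contraction $\triangle_{n-1}$ is built from $g$ only, and the $\mathcal{L}\otimes\overline{\mathcal{L}}$ factors cancel, so the connection $A$ drops out of the Leibniz rule. We then get
\begin{equation*}
i\,\escal{\rho,\overline{\rho}}\,\nabla^g_X\omega = c_n\,{*}(X^\flat\wedge\rho)\triangle_{n-1}\overline{\rho}+\overline{c}_n\,\rho\triangle_{n-1}{*}(X^\flat\wedge\overline{\rho}).
\end{equation*}
The task is then to evaluate these two contractions, in which an $(n+1)$-form's Hodge dual $n$-form is contracted against an $n$-form.

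I would do this in the local adapted frame of Lemma \ref{lemma:varphi_square}. That frame is $\{e_1,\ldots,e_{2n},\xi\}$ with $\theta^j=e^{2j-1}+ie^{2j}$, $\rho=(\theta^1\wedge\cdots\wedge\theta^n)\otimes\mathfrak{l}$, $\escal{\rho,\overline{\rho}}=2^n$ and $\omega=-\sum_j e^{2j-1}\wedge e^{2j}$. Since both sides of the claimed identity are tensorial and $\mathbb{C}$-linear in $X$, it suffices to check it for $X=\xi$, for $X=e_{\bar k}:=(\theta^k)^\sharp$-type vectors with $\iota_{X}\rho=0$, and for their conjugates.

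For $X=\xi$, the form ${*}(\zeta\wedge\rho)$ is, up to a constant, $\rho$ itself: the $(n,0)$-part is self-dual-like under the transverse Hodge star. Indeed, on $\mathcal{H}$, $\star_{\mathcal{H}}(\theta^1\wedge\cdots\wedge\theta^n)=\lambda_n\,\theta^1\wedge\cdots\wedge\theta^n$ with $\lambda_n$ a power of $i$. This gives a multiple of $\rho\triangle_{n-1}\overline{\rho}\pm\overline{\rho}\triangle_{n-1}\rho$, and the relation $\overline{c}_n=(-1)^{n+1}c_n$ should force a cancellation, consistent with $\xi^\flat\wedge\zeta=0$.

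For $X$ horizontal, say $X^\flat=\overline{\theta}^k$, we have $X^\flat\wedge\rho$ equal to $\overline{\theta}^k\wedge\theta^1\wedge\cdots\wedge\theta^n$. Its Hodge dual is a multiple of $\zeta\wedge\theta^1\wedge\cdots\widehat{\theta^k}\cdots\wedge\theta^n$. Contracting with $\overline{\rho}$ over $n-1$ slots kills all pairs except $\theta^j$ with $\overline{\theta}^j$ for $j\neq k$, leaving a multiple of $\zeta\wedge\overline{\theta}^k$. In the other term, $X^\flat\wedge\overline{\rho}=0$. Collecting constants should give $i\,2^n\nabla_X\omega=i\,2^n\alpha\,\overline{\theta}^k\wedge\zeta$, which is the claim; the conjugate case follows by reality of $\omega$ or by the symmetric computation.

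The main obstacle is the sign and power-of-$i$ bookkeeping, together with the orientation convention for $\nu$ and the chosen $\ell$. As an independent consistency check, I would verify the result against Lemma \ref{lemma:nabla_zeta}. Differentiating $\varphi^2=-\Id+\zeta\otimes\xi$ and using $\nabla\zeta=-\alpha\omega$ determines $\nabla_X\omega$ on mixed arguments as $\alpha(X^\flat\wedge\zeta)$, up to the component along $\mathcal{H}\otimes\mathcal{H}$. That component can be shown to vanish from the frame computation, since all surviving terms contain $\zeta$. Fixing the constant against this identity avoids tracking every sign.
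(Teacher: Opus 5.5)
Your proposal is correct and essentially the paper's proof, since you use the $\varphi^2$ identity to fix the mixed components: the paper obtains $(\nabla^g_X\omega)(Y,\xi)$ from $\nabla^g\zeta=-\alpha\omega$, $\iota_\xi\omega=0$ and $\varphi^2=-\Id+\zeta\otimes\xi$, and kills the $\mathcal{H}\otimes\mathcal{H}$ part with your argument written frame-free, namely $\nabla^{g,A}_\xi\rho=(-1)^{\binom{n}{2}}i\alpha\rho$, $\nabla^{g,A}_X\rho\propto\iota_X\rho\wedge\zeta$ for horizontal $X$, and $\iota_\xi\overline{\rho}=0$. The one point to tighten is $X=\xi$: both terms are multiples of $\rho\triangle_{n-1}\overline{\rho}$ (not of $\rho\triangle_{n-1}\overline{\rho}\pm\overline{\rho}\triangle_{n-1}\rho$) with total coefficient $c_n\lambda_n+\overline{c_n\lambda_n}$, so the cancellation needs $\lambda_n=\pm i^n$ specifically, not just ``a power of $i$'', which the explicit computation of $*_{\mathcal{H}}\theta$ confirms.
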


\begin{proof}
From Lemma \ref{lemma:nabla_zeta} we get $\nabla^g_X\zeta=-\alpha\iota_X\omega$, thus $\nabla^g_X\xi=(\nabla^g_X\zeta)^\sharp=-\alpha(\iota_X\omega)^\sharp=-\alpha\varphi(X)$. Using $\iota_{\xi}\omega=0$ we compute:
\begin{align*}
(\nabla^g_X\omega)(Y,\xi)&=-\omega(Y,\nabla^g_X\xi)=\alpha\omega(Y,\varphi(X))\\
&=\alpha g(\varphi(X),\varphi(Y))=\alpha\big(g(X,Y)-\zeta(X)\zeta(Y)\big),
\end{align*}

\noindent
where the last equality follows from $\varphi^2=-\Id+\zeta\otimes\xi$, see Lemma \ref{lemma:varphi_square}. On the other hand, it is clear that $(X^\flat\wedge\zeta)(Y,\xi)=g(X,Y)-\zeta(X)\zeta(Y)$. Hence, we obtain the expression stated.\medskip

It remains to show that $(\nabla^g_X\omega)(Y,Z)=0$ for all $Y,Z\in\mathcal{H}=\ker(\zeta)$. First, let us consider $X=\xi$. Since $i^n\rho\wedge\overline{\rho}\wedge\zeta=(-1)^{\binom{n}{2}}2^n\nu$ and $*\rho=(-1)^{\binom{n+1}{2}}i^n\rho\wedge\zeta$ we have $*(\zeta\wedge\rho)=(-1)^{\binom{n+1}{2}}i^n\rho$ and then:
\begin{equation*}
\nabla^{g,A}_\xi\rho=i^{n+1}\alpha*(\zeta\wedge\rho)=(-1)^{\binom{n}{2}}i\alpha\rho
\end{equation*}

\noindent
by Proposition \ref{prop:KillingSpinorBilinear}. Using $\nabla^{g,A}_\xi(\rho\triangle_{n-1}\overline{\rho})=(\nabla^{g,A}_\xi\rho)\triangle_{n-1}\overline{\rho}+\rho\triangle_{n-1}(\nabla^{g,A}_\xi\overline{\rho})$ we conclude that $\nabla^g_\xi\omega=0$. Now assume that $X,Y,Z\in\mathcal{H}$. Then:
\begin{equation*}
*(X^\flat\wedge\rho)=(-1)^n\iota_X(*\rho)=(-1)^{\binom{n}{2}}i^n\iota_X(\rho\wedge\zeta)=(-1)^{\binom{n}{2}}i^n\iota_X\rho\wedge\zeta
\end{equation*}

\noindent
and $\nabla^{g,A}_X\rho=(-1)^{\binom{n+1}{2}}i\alpha\iota_X\rho\wedge\zeta$. Since $\iota_{\xi}\rho=\iota_{\xi}\overline{\rho}=0$ we have:
\begin{align*}
\escal{\iota_Y(\iota_X\rho\wedge\zeta),\iota_Z\overline{\rho}}&=\escal{\iota_Y\iota_X\rho\wedge\zeta,\iota_Z\overline{\rho}}=(-1)^{n-2}\escal{\zeta\wedge\iota_Y\iota_X\rho,\iota_Z\overline{\rho}}\\
&=(-1)^{n-2}\escal{\iota_Y\iota_X\rho,\iota_\xi\iota_Z\overline{\rho}}=(-1)^{n-1} \escal{\iota_Y\iota_X\rho ,\iota_Z\iota_\xi \overline{\rho}}=0.
\end{align*}

\noindent
A straightforward computation using $\omega=(i\escal{\rho,\overline{\rho}})^{-1}\rho\triangle_{n-1}\overline{\rho}$, $(\rho\triangle_{n-1}\overline{\rho})(Y,Z)=\escal{\iota_Y\rho,\iota_Z\overline{\rho}}-\escal{\iota_Y\overline{\rho},\iota_Z\rho}$, and the above result we conclude that $(\nabla^g_X\omega)(Y,Z)=0$ for all $X,Y,Z\in\mathcal{H}$.
\end{proof}

The formula in Lemma \ref{lemma:nabla_omega} is equivalent to $(\nabla^g_X\varphi)Y=\alpha(g(X,Y)\xi-\zeta(Y)X)$, which is the integrability condition for the tuple $(\xi, \zeta, \varphi, g)$ to constitute a well-defined $\alpha$-Sasakian structure on $M$. Therefore, we have proved the following result.

\begin{prop}
Let $(M,g)$ be a $(2n+1)$-dimensional oriented and spin-c Riemannian manifold admitting a pure spin-c Killing spinor with real Killing constant $\alpha\in\mathbb{R}^{\ast}$. Then $(M,g)$ is $\alpha$-Sasakian.
\end{prop}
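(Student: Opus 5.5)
The proposition is essentially the assembly of Lemmas \ref{lemma:nabla_zeta}, \ref{lemma:varphi_square} and \ref{lemma:nabla_omega}, so the plan is to verify that these lemmas produce all the defining data of an $\alpha$-Sasakian structure. Starting from the pure spin-c Killing spinor $\eta$, Proposition \ref{prop:KillingSpinorBilinear} gives a Cartan $n$-form $\rho$ with $\nabla^{g,A}_X\rho=i^{n+1}\alpha*(X^\flat\wedge\rho)$. From $\rho$ we build $\zeta$, $\xi=\zeta^\sharp$, $\omega$ and $\varphi=-\alpha^{-1}\nabla^g\xi$ as above. All of these are globally defined real objects: the dependence on the local section $\mathfrak{l}$ of $\mathcal{L}$ cancels in $\rho\wedge\overline{\rho}$ and in $\rho\triangle_{n-1}\overline{\rho}$ through the Hermitian pairing $\mathcal{L}\otimes\overline{\mathcal{L}}\cong\mathbb{C}$. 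Moreover, $\escal{\rho,\overline{\rho}}$ is a nowhere vanishing constant, as shown in the proof of Lemma \ref{lemma:nabla_zeta}.

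Next we check the almost contact metric axioms. We have $\zeta(\xi)=|\xi|^2=1$ by the normalization computed before Lemma \ref{lemma:nabla_zeta}. Lemma \ref{lemma:varphi_square} gives $\varphi^2=-\Id+\zeta\otimes\xi$, and its proof also yields $\varphi(\xi)=0$. Together with the skew-symmetry $g(\varphi X,Y)=-g(X,\varphi Y)$ this gives compatibility:
\begin{equation*}
g(\varphi X,\varphi Y)=-g(X,\varphi^2Y)=g(X,Y)-\zeta(X)\zeta(Y).
\end{equation*}
Lemma \ref{lemma:nabla_zeta} shows that $\xi$ is Killing, and it gives $d\zeta=2\,\mathrm{Alt}(\nabla^g\zeta)=-2\alpha\omega$. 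Hence $\zeta$ is a contact form with $\omega$ proportional to $d\zeta$: since $\omega^n\wedge\zeta\neq0$ by the explicit frame expression $\omega=-\sum_j e^{2j-1}\wedge e^{2j}$, we get $\zeta\wedge(d\zeta)^n\neq 0$.

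Finally we translate Lemma \ref{lemma:nabla_omega} into the Sasakian integrability condition. Since $\omega(Y,Z)=g(\varphi Y,Z)$ and $\nabla^g g=0$,
\begin{equation*}
g((\nabla^g_X\varphi)Y,Z)=(\nabla^g_X\omega)(Y,Z)=\alpha\big(g(X,Y)\zeta(Z)-\zeta(Y)g(X,Z)\big),
\end{equation*}
which gives $(\nabla^g_X\varphi)Y=\alpha(g(X,Y)\xi-\zeta(Y)X)$. This is the standard characterization of an $\alpha$-Sasakian structure, namely a metric contact structure that is normal with the appropriate scaling. If one prefers the normality definition, it follows from this identity by the classical argument: expanding $[\varphi,\varphi]+d\zeta\otimes\xi$ using a torsion-free connection makes it vanish.

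The main point of care is not analytic but conventional. The signs and the scaling by $\alpha$, including the factor relating $d\zeta$ to $\omega$ and the sign of $\alpha$, have to match the chosen definition of $\alpha$-Sasakian; for $\alpha<0$ one may need to replace $\xi$ by $-\xi$ or $\varphi$ by $-\varphi$. I would settle this first by fixing the definition through the covariant identity above, so that Lemma \ref{lemma:nabla_omega} applies verbatim.
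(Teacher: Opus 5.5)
Your proposal is correct and follows the paper's own route. The proposition is obtained by assembling Lemmas \ref{lemma:nabla_zeta}, \ref{lemma:varphi_square} and \ref{lemma:nabla_omega}, and rewriting $\nabla^g_X\omega=\alpha X^\flat\wedge\zeta$ as $(\nabla^g_X\varphi)Y=\alpha(g(X,Y)\xi-\zeta(Y)X)$. Your additional checks (unit length of $\xi$, compatibility of $g$ with $\varphi$, the contact condition) only make explicit what the paper leaves implicit.

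The sign caveat for $\alpha<0$ is unnecessary. The paper's notion of $\alpha$-Sasakian, stated at the start of Section \ref{sec:if}, allows any $\alpha\in\mathbb{R}^\ast$ and consists of exactly the equations supplied by Lemmas \ref{lemma:nabla_zeta} and \ref{lemma:nabla_omega}, so those lemmas apply verbatim.
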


% % % % % % % % % % % % % % % % % % % % % % % % % % % % % % % % % % % % % %
% % % % % % % % % % % % % % % % % % % % % % % % % % % % % % % % % % % % % %

\section{The \emph{if} direction}
\label{sec:if}

% % % % % % % % % % % % % % % % % % % % % % % % % % % % % % % % % % % % % %
% % % % % % % % % % % % % % % % % % % % % % % % % % % % % % % % % % % % % %

Assume that $(M,g)$ is a $(2n+1)$-dimensional $\alpha$-Sasakian manifold. By definition, $(M,g)$ admits an almost contact metric structure $(\xi, \zeta, \varphi, g)$ where $\xi$ is the Reeb vector field, $\zeta = \xi^\flat$ is the contact one-form, and $\varphi$ is the fundamental $(1,1)$-tensor field acting as an isometry on the contact distribution $\mathcal{H}  = \ker(\zeta)$ and satisfying $\varphi^2 = -\mathrm{Id} + \zeta \otimes \xi$. The $\alpha$-Sasakian condition amounts to the following equations:
\begin{equation*}
\nabla^g_X \zeta = -\alpha\, \iota_X \omega,\qquad \nabla^g_X \omega = \alpha\, X^\flat \wedge \zeta,
\end{equation*}

\noindent
where $\alpha\in\mathbb{R}^{\ast}$, $\nabla^g$ is the Levi-Civita connection, and $\omega(X,Y) = g(\varphi(X),Y)$ for $X,Y\in\Gamma(TM)$ is the fundamental two-form of the almost contact metric structure. The restriction:
\begin{equation*}
J = \varepsilon_n\varphi\vert_{\mathcal{H}},\qquad \varepsilon_n:=(-1)^{\binom{n}{2}+1}\in\{\pm1\}
\end{equation*}

\noindent
defines an almost complex structure on the real rank-$2n$ distribution $\mathcal{H} \subset TM$, which induces a splitting of its complexification into holomorphic and anti-holomorphic subbundles:
\begin{equation*}
\mathcal{H} _{\mathbb{C}} := \mathcal{H}\otimes \mathbb{C} = \mathcal{H} ^{(1,0)} \oplus \mathcal{H} ^{(0,1)}.
\end{equation*}

\noindent
Since $\mathcal{H} ^{(1,0)}$ is a complex vector bundle of rank $n$, its top exterior power defines a complex line bundle over $M$. We identify the characteristic line bundle $\mathcal{L}$ of the underlying $\mathrm{Spin}^c(2n+1)$-structure with this determinant bundle, setting $\mathcal{L} \cong \wedge^n_{\mathbb{C}} \mathcal{H} ^{(1,0)}$.

\begin{remark}
Every $\alpha$-Sasakian manifold $M^{2n+1}$ is, in particular, an almost contact metric manifold. Hence $TM$ admits a reduction of the structure group from $\mathrm{SO}(2n+1)$ to $\mathrm{U}(n)$. Since the standard inclusion $\mathrm{U}(n)\subset\mathrm{SO}(2n)\subset \mathrm{SO}(2n+1)$ admits a canonical lift $\mathrm{U}(n)\to\mathrm{Spin}^{c}(2n+1)$, see e.g.\ \cite{Spin89}, this reduction induces a canonical spin-c structure on $M$.
\end{remark}

Let $\{e_1,\ldots,e_{2n},e_{2n+1}=\xi\}$ be a local adapted orthonormal frame on $(M,g)$ such that $J(e_{2j-1}) = e_{2j}$ and $J(e_{2j}) = -e_{2j-1}$ for $j=1, \ldots, n$, with corresponding dual coframe $\{e^1,\ldots,e^{2n},e^{2n+1}=\zeta\}$. Under this choice of frame, a local frame for $\mathcal{H}^{(1,0)}$ is given by the complex vector fields:
\begin{equation*}
Z_j := \frac{1}{2}(e_{2j-1} - ie_{2j}), \qquad j=1, \ldots, n
\end{equation*}

\noindent
and a local frame for the dual bundle $\mathcal{H}^{(1,0)\ast}$ is given by the complex one-forms:
\begin{equation*}
\theta^j := e^{2j-1} + ie^{2j}, \qquad j=1, \ldots, n.
\end{equation*}

\noindent
We define the $\mathcal{L}$-valued complex exterior $n$-form $\rho \in \Omega^n_{\mathbb{C}}(M, \mathcal{L})$ as follows:
\begin{equation*}
\rho := \theta \otimes \mathfrak{l} := (e^1 + i e^2) \wedge \cdots \wedge (e^{2n-1} + i e^{2n}) \otimes \mathfrak{l},
\end{equation*}

\noindent
where, using the bundle isomorphism $\mathcal{L} \cong \wedge^n_{\mathbb{C}} \mathcal{H} ^{(1,0)}$, the local non-vanishing section $\mathfrak{l} \in \Gamma(\mathcal{L})$ is explicitly identified in terms of the underlying real local orthonormal frame of $(M,g)$ via the complexified exterior product as:
\begin{equation*}
\mathfrak{l} = (e_1 - i e_2) \wedge \cdots \wedge (e_{2n-1} - i e_{2n}) = 2^n Z_1 \wedge \cdots \wedge Z_n \in \Gamma(\wedge^n_{\mathbb{C}} \mathcal{H} ^{(1,0)}).
\end{equation*}

\begin{lemma}
The $\mathcal{L}$-valued complex exterior form $\rho\in\Omega^n_\C(M,\mathcal{L})$ is globally well-defined.
\end{lemma}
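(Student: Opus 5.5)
The plan is to show that the local expression $\theta\otimes\mathfrak{l}$ does not depend on the choice of local adapted orthonormal frame. Then the local definitions agree on overlaps and glue to a global section of $\wedge^n T^*_\C M\otimes\mathcal{L}$. Take two adapted frames $\{e_a,\xi\}$ and $\{e'_a,\xi\}$ on an overlap; the Reeb field $\xi$ is canonical, so only the horizontal part changes. Both frames satisfy $J(e_{2j-1})=e_{2j}$, so the transition matrix on $\mathcal{H}$ commutes with $J$ and lies in $\mathrm{SO}(2n)$. It therefore lies in $\mathrm{U}(n)$, acting by some unitary matrix $U=(U_{jk})$ on $\mathcal{H}^{(1,0)}$.

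First I will compute how each factor transforms. The $Z_j$ form a frame of $\mathcal{H}^{(1,0)}$, and $Z'_j=\sum_k U_{kj}Z_k$. Since $\theta^j(Z_k)=\delta_{jk}$, the $\theta^j$ form the dual frame of $\mathcal{H}^{(1,0)\ast}$, extended by zero on $\xi$ and on $\mathcal{H}^{(0,1)}$. Hence $\theta'^j=\sum_k (U^{-1})_{jk}\theta^k$, and therefore
\begin{equation*}
\theta'^1\wedge\cdots\wedge\theta'^n=\det(U)^{-1}\,\theta^1\wedge\cdots\wedge\theta^n .
\end{equation*}
Similarly,
\begin{equation*}
\mathfrak{l}'=2^n Z'_1\wedge\cdots\wedge Z'_n=\det(U)\,\mathfrak{l}.
\end{equation*}
The determinants cancel, so $\theta'\otimes\mathfrak{l}'=\theta\otimes\mathfrak{l}$.

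A more intrinsic way to say the same thing is that $\rho$ is the image of the canonical element $\mathrm{Id}\in\End(\wedge^n\mathcal{H}^{(1,0)})\cong \wedge^n\mathcal{H}^{(1,0)\ast}\otimes\wedge^n\mathcal{H}^{(1,0)}$. This element is then viewed inside $\wedge^n T^*_\C M\otimes\mathcal{L}$ by pulling back forms along the projection $T_\C M\to\mathcal{H}^{(1,0)}$ with kernel $\mathcal{H}^{(0,1)}\oplus\C\xi$. The only point to check is that this pullback agrees with the $\theta^j$ as defined, that is, that $\theta^j$ annihilates $\xi$ and $\mathcal{H}^{(0,1)}$. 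This holds because $\theta^j(e_{2j-1}+ie_{2j})=1+i^2=0$. With that check, independence of the frame is immediate.

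The only potential obstacle is the sign and conjugation bookkeeping. Two things must be confirmed: that the $\theta^j$ are exactly dual to the $Z_j$, so that they transform by the inverse matrix and not by its conjugate, and that the sign $\varepsilon_n$ in $J$ is used consistently, so that $Z_j$ really lies in $\mathcal{H}^{(1,0)}$. I would verify this directly on the frame: $JZ_j=\tfrac12(e_{2j}+ie_{2j-1})=iZ_j$. I would also note that the statement uses no orientation or connection data, so nothing beyond the $\mathrm{U}(n)$-invariance of $\det^{-1}\otimes\det$ is needed.
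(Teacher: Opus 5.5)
Your argument is correct and is essentially the paper's: adapted frames differ by a unitary $U$ on $\mathcal{H}^{(1,0)}$, $\mathfrak{l}$ picks up $\det U$ and $\theta$ picks up $\det(U)^{-1}=\overline{\det U}$, so $\theta\otimes\mathfrak{l}$ is frame independent; the paper gets the factor from duality plus unitarity, you get it from duality alone, which is the same computation. One harmless slip in your intrinsic reformulation: with the determinant pairing one has $\theta(\mathfrak{l})=2^n$, so $\rho$ is $2^n$ times the image of $\mathrm{Id}$ rather than the image itself.
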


\begin{proof}
Under a local change of the horizontal frame by a transition matrix $U \in \mathrm{U}(n)$ such that:
\begin{equation*}
\tilde{Z}_j = \sum_k U_{jk} Z_k    
\end{equation*}

\noindent
the top-degree vector field section transforms as $\tilde{\mathfrak{l}} = \det(U) \mathfrak{l}$. By duality and unitarity, the corresponding coframe forms transform under the complex-conjugate matrix, yielding $\tilde{\theta} = \det(\overline{U})\theta = \overline{\det(U)}\theta$. Their variations cancel out identically under the tensor product since $\overline{\det(U)}\det(U) = 1$, whence $\rho$ becomes a globally well-defined, locally decomposable nowhere vanishing $n$-form on $M$ taking values in $\mathcal{L}$.
\end{proof}

Let $\mathcal{P}_{\mathcal{H}} \colon TM \to \mathcal{H}$ be the orthogonal projection onto the contact distribution $\mathcal{H} = \ker(\zeta)$, defined explicitly by $\mathcal{P}_{\mathcal{H}}(X) = X - \zeta(X)\xi$ for every $X\in TM$. We define a connection $\nabla^{\mathcal{H}}$ on the real vector bundle $\mathcal{H} \to M$ by projecting the Levi-Civita connection $\nabla^g$:
\begin{equation*}
\nabla^{\mathcal{H}}_X Y := \mathcal{P}_{\mathcal{H}}(\nabla^g_X Y)
\end{equation*}

\noindent
for every $Y \in \Gamma(\mathcal{H})$ and $X\in \Gamma(TM)$. Since $(M,g)$ is $\alpha$-Sasakian, the restricted endomorphism $J = \varepsilon_n\varphi\vert_{\mathcal{H}}$ acts as a parallel complex structure with respect to $\nabla^{\mathcal{H}}$, meaning $\nabla^{\mathcal{H}}_X (JY) = J \nabla^{\mathcal{H}}_X Y$. Extending $\nabla^{\mathcal{H}}$ complex-linearly to the complexified bundle $\mathcal{H}_{\mathbb{C}}$, it preserves the splitting: 
\begin{equation*}
\mathcal{H}_{\mathbb{C}} = \mathcal{H}^{(1,0)} \oplus \mathcal{H}^{(0,1)}
\end{equation*}

\noindent
and restricts to a canonical connection on the subbundle $\mathcal{H}^{(1,0)}$. This connection naturally induces a connection $\nabla^{\mathrm{Det}}$ on the complex determinant line bundle $\mathcal{L} = \wedge^n_{\mathbb{C}} \mathcal{H}^{(1,0)}$, which acts on the local section $\mathfrak{l}$ via the trace of the horizontal connection forms:
\begin{equation*}
\nabla^{\mathrm{Det}}_X \mathfrak{l} = i \sum_{j=1}^n \varpi_{2j-1,2j}(X) \mathfrak{l},
\end{equation*}

\noindent
where $\varpi_{ab}(X):=g(\nabla^g_Xe_a,e_b)$. The Hermitian connection $\nabla^A$ on $\mathcal{L}$ that we shall use to define the connection $\nabla^{g,A}$ occurring in the Killing spinor equation is defined as the vertical deformation of this induced connection:
\begin{equation*}
\nabla^A_X := \nabla^{\mathrm{Det}}_X + (-1)^{\binom{n}{2}}i\alpha \zeta(X) \mathrm{Id}_{\mathcal{L}}.
\end{equation*}

We now compute the total covariant derivative of $\rho \in\Omega^n_\C(M,\mathcal{L})$ with respect to the twisted connection $\nabla^{g,A} = \nabla^g \otimes \nabla^A$. Locally we have $\rho = \theta\otimes\mathfrak{l}$ with $\theta\in \Omega^n_{\C}(M)$, and by the Leibniz rule, we obtain:
\begin{equation*}
\nabla^{g,A}_X \rho = (\nabla^g_X \theta) \otimes \mathfrak{l} + \theta \otimes \nabla^A_X \mathfrak{l}.
\end{equation*}

The $\alpha$-Sasakian structure equations imply that the Levi-Civita connection mixed components satisfy:
\begin{equation*}
\varpi_{a, 2n+1}(X) = g(\nabla^g_X e_a, \xi) = \alpha (\iota_X \omega)(e_a)=\alpha g(\varphi(X),e_a)=\varepsilon_n\alpha g(J(X),e_a).
\end{equation*}

\noindent
For the complex one-forms $\theta^j$, evaluating the vertical projection yields the explicit relation:
\begin{equation*}
\nabla^g_X \theta^j = \mathcal{P}_{\mathcal{H}}(\nabla^g_X \theta^j)  + i \varepsilon_n\alpha \theta^j(X) \zeta.
\end{equation*}

\noindent
Extending this action to the full wedge product $\theta = \theta^1 \wedge \cdots \wedge \theta^n$, the vertical components add to yield:
\begin{equation*}
(\nabla^g_X \theta)_{\text{vert}} = i \varepsilon_n\alpha \sum_{j=1}^n \theta^1 \wedge \cdots \wedge (\theta^j(X)\zeta) \wedge \cdots \wedge \theta^n = i \varepsilon_n\alpha \zeta \wedge \iota_X \theta.
\end{equation*}

\noindent
Since $\mathfrak{l}$ is the top exterior product of the $(1,0)$-vectors dual, up to the factor $2^n$, to the local coframe $\{\theta^j\}_{j=1}^n$, its covariant derivative produces the trace contribution opposite to that of $\theta=\theta^1\wedge\cdots\wedge\theta^n$. Evaluating $\nabla^A$ on the local section $\mathfrak{l}$ yields:
\begin{equation*}
\nabla^A_X \mathfrak{l} = \Big(i\sum_{j=1}^n \varpi_{2j-1, 2j}(X) + (-1)^{\binom{n}{2}}i\alpha\zeta(X)\Big)\mathfrak{l}.
\end{equation*}

\noindent
When evaluating the total covariant derivative $\nabla^{g,A}_X \rho$, the local horizontal trace connection forms $i\sum_{j=1}^n \varpi_{2j-1, 2j}(X)$ from $\nabla^A_X \mathfrak{l}$ and $-i\sum_{j=1}^n \varpi_{2j-1, 2j}(X)$ from $\nabla^g_X \theta$ cancel out identically. Then we obtain:
\begin{align*}
\nabla^{g,A}_X\rho&=(\nabla^g_X\theta)\otimes\mathfrak{l}+\theta\otimes\nabla^A_X\mathfrak{l}\\
&=\Big(-i\sum_{j=1}^n\varpi_{2j-1,2j}(X)\theta+i\varepsilon_n\alpha\zeta\wedge\iota_X\theta\Big)\otimes\mathfrak{l}+\theta\otimes\Big(i\sum_{j=1}^n\varpi_{2j-1,2j}(X)+(-1)^{\binom{n}{2}}i\alpha\zeta(X)\Big)\mathfrak{l}\\
&=i\varepsilon_n\alpha(\zeta\wedge\iota_X\theta)\otimes\mathfrak{l}+(-1)^{\binom{n}{2}}i\alpha\zeta(X)\rho.
\end{align*}

\begin{lemma}
The $\mathcal{L}$-valued complex exterior form $\rho\in\Omega^n_\C(M,\mathcal{L})$ satisfies $\nabla^{g,A}_X \rho = i^{n+1}\alpha \ast (X^\flat \wedge \rho)$.
\end{lemma}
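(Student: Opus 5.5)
The computation just before the statement already gives
\begin{equation*}
\nabla^{g,A}_X\rho=i\varepsilon_n\alpha(\zeta\wedge\iota_X\theta)\otimes\mathfrak{l}+(-1)^{\binom{n}{2}}i\alpha\zeta(X)\rho .
\end{equation*}
It therefore remains to rewrite the right-hand side $i^{n+1}\alpha\ast(X^\flat\wedge\rho)$ in the same form and compare. The plan is to split $X=X_{\mathcal{H}}+\zeta(X)\xi$ with $X_{\mathcal{H}}\in\mathcal{H}$, and to treat the two pieces separately, using the Hodge star identities already recorded in the proof of Lemma \ref{lemma:nabla_omega}. These identities are purely algebraic consequences of $\rho$ being the standard Cartan form in an adapted frame, so they hold here as well. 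They are
\begin{equation*}
i^n\rho\wedge\overline{\rho}\wedge\zeta=(-1)^{\binom{n}{2}}2^n\nu,\qquad \ast\rho=(-1)^{\binom{n+1}{2}}i^n\rho\wedge\zeta,
\end{equation*}
for the orientation given by the adapted frame $\{e_1,\dots,e_{2n},\xi\}$.

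For the vertical part, the second identity gives $\ast(\zeta\wedge\rho)=(-1)^{\binom{n+1}{2}}i^n\rho$. Hence
\begin{equation*}
i^{n+1}\alpha\,\zeta(X)\ast(\zeta\wedge\rho)=(-1)^{\binom{n+1}{2}}i^{2n+1}\alpha\,\zeta(X)\rho=(-1)^{\binom{n+1}{2}+n}i\alpha\,\zeta(X)\rho,
\end{equation*}
and since $\binom{n+1}{2}+n\equiv\binom{n}{2}\pmod 2$ this equals $(-1)^{\binom{n}{2}}i\alpha\zeta(X)\rho$. That is exactly the second term. On the left-hand side, the vertical piece $\zeta\wedge\iota_X\theta$ only involves $X_{\mathcal{H}}$, because $\iota_\xi\theta=0$. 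So the two vertical contributions match.

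For the horizontal part, I follow the computation in Lemma \ref{lemma:nabla_omega}. For $X\in\mathcal{H}$,
\begin{equation*}
\ast(X^\flat\wedge\rho)=(-1)^n\iota_X\ast\rho=(-1)^{n+\binom{n+1}{2}}i^n\,\iota_X\rho\wedge\zeta=(-1)^{\binom{n}{2}}i^n\,\iota_X\rho\wedge\zeta,
\end{equation*}
using $\iota_X\zeta=0$. Multiplying by $i^{n+1}\alpha$ gives $(-1)^{\binom{n}{2}+n}i\alpha\,\iota_X\rho\wedge\zeta$. Rewriting $\iota_X\rho\wedge\zeta=(-1)^{n-1}\zeta\wedge\iota_X\rho$, this becomes
\begin{equation*}
(-1)^{\binom{n}{2}+1}i\alpha\,\zeta\wedge\iota_X\rho=i\varepsilon_n\alpha\,(\zeta\wedge\iota_X\theta)\otimes\mathfrak{l},
\end{equation*}
which matches the first term by the definition $\varepsilon_n=(-1)^{\binom{n}{2}+1}$. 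Adding the two parts yields the statement.

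The main obstacle is sign bookkeeping. One has to confirm that the identity $\ast\rho=(-1)^{\binom{n+1}{2}}i^n\rho\wedge\zeta$ holds with the orientation and frame conventions of this section, with $J=\varepsilon_n\varphi|_{\mathcal{H}}$ and $\theta^j=e^{2j-1}+ie^{2j}$. I would check it directly: compute $\ast(\theta^1\wedge\cdots\wedge\theta^n)$ by noting that $\ast$ on $\wedge^n\mathcal{H}^*$ composed with $\wedge\zeta$ reduces to the $2n$-dimensional Hodge star on $\mathcal{H}$. The form $\theta$ is of type $(n,0)$ with respect to the frame complex structure, hence self-dual up to the factor $(-1)^{\binom{n+1}{2}}i^n$. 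I would also verify this on the cases $n=1,2$ as sanity checks. The same care is needed in the earlier claim $\nabla^g_X\theta^j\supset i\varepsilon_n\alpha\theta^j(X)\zeta$, which I take as given from the preceding computation.
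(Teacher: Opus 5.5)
Your proposal is correct and is essentially the paper's proof. You use the same splitting $X=X_{\mathcal{H}}+\zeta(X)\xi$ and match term by term against Equation \eqref{eq:cov_deriv}. Your identity $\ast\rho=(-1)^{\binom{n+1}{2}}i^n\rho\wedge\zeta$ (for the orientation $\nu=\nu_{\mathcal{H}}\wedge\zeta$) plays the role of the paper's $\ast_{\mathcal{H}}\theta=(-1)^{\binom{n+1}{2}}i^n\theta$ combined with $\ast\beta=\ast_{\mathcal{H}}\beta\wedge\zeta$ for horizontal $\beta$. Your sign bookkeeping checks out, including the reordering $\iota_X\rho\wedge\zeta=(-1)^{n-1}\zeta\wedge\iota_X\rho$, which the paper leaves implicit.
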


\begin{proof}
We have seen that:
\begin{equation}
\label{eq:cov_deriv}
\nabla_{X}^{g,A}\rho = i\varepsilon_n\alpha(\zeta \wedge \iota_{X}\theta) \otimes \mathfrak{l} + (-1)^{\binom{n}{2}}i\alpha\zeta(X)\rho.
\end{equation}

\noindent
Decomposing the vector field $X$ into its horizontal and Reeb components, $X = X_{\mathcal{H}} + \zeta(X)\xi$, we have the dual decomposition $X^{\flat} = X_{\mathcal{H}}^{\flat} + \zeta(X)\zeta$. Applying the Hodge star operator linearly, we get:
\begin{equation*}
*(X^{\flat} \wedge \rho) = *(X_{\mathcal{H}}^{\flat} \wedge \theta)\otimes \mathfrak{l} + \zeta(X)*(\zeta \wedge \theta)\otimes \mathfrak{l}.
\end{equation*}

\noindent
For the horizontal component, on a $(2n+1)$-dimensional contact metric manifold with volume form $\nu=\nu_{\mathcal{H}}\wedge\zeta$, the Hodge star operator reduces to the $2n$-dimensional Hodge star operator on the contact distribution $\mathcal{H}=\ker(\zeta)$. Using that:
\begin{equation*}
*_{\mathcal{H}}\theta=(-1)^{\binom{n+1}{2}}i^n\theta
\end{equation*}

\noindent
for the $(n,0)$-form $\theta$ we get:
\begin{equation*}
*(X_{\mathcal{H}}^{\flat} \wedge \theta) = *_{\mathcal{H}}(X_{\mathcal{H}}^{\flat} \wedge \theta) \wedge \zeta = (-1)^n\iota_{X_{\mathcal{H}}}(*_{\mathcal{H}}\theta)\wedge\zeta = (-1)^{\binom{n}{2}}i^n \iota_{X_{\mathcal{H}}} \theta \wedge \zeta.
\end{equation*}

\noindent
Multiplying by the overall constant $i^{n+1}\alpha$, the horizontal term becomes:
\begin{equation*}
i^{n+1}\alpha *(X_{\mathcal{H}}^{\flat} \wedge \theta) = i^{n+1}\alpha (-1)^{\binom{n}{2}}i^n \iota_{X_{\mathcal{H}}} \theta \wedge \zeta = i\varepsilon_n\alpha(\zeta\wedge\iota_X\theta),
\end{equation*}

\noindent
where we have used the fact that $\iota_{X_{\mathcal{H}}}\theta = \iota_{X}\theta$ since $\theta$ is purely horizontal.\medskip

For the vertical component, we have:
\begin{equation*}
*(\zeta\wedge\theta)=*_{\mathcal{H}}\theta=(-1)^{\binom{n+1}{2}}i^n\theta.
\end{equation*}

\noindent
Multiplying it by the constant $i^{n+1}\alpha$ we obtain:
\begin{equation*}
i^{n+1}\alpha\zeta(X)*(\zeta\wedge\theta)=(-1)^{\binom{n+1}{2}}i^ni^{n+1}\alpha\zeta(X)\theta=(-1)^{\binom{n}{2}}i\alpha\zeta(X)\theta.
\end{equation*}

\noindent
Combining the horizontal and vertical terms we get:
\begin{equation*}
i^{n+1}\alpha *(X^{\flat} \wedge \rho) = i\varepsilon_n\alpha(\zeta \wedge \iota_{X}\theta) \otimes \mathfrak{l} + (-1)^{\binom{n}{2}}i\alpha\zeta(X)\rho,
\end{equation*}

\noindent
which matches the expression for $\nabla_{X}^{g,A}\rho$ in Equation \eqref{eq:cov_deriv}, thereby concluding the proof.
\end{proof}

Therefore, by Proposition \ref{prop:KillingSpinorBilinear}, we have the following result.

\begin{prop}
Let $(M,g)$ be a $(2n+1)$-dimensional $\alpha$-Sasakian manifold. Then $(M,g)$ admits a pure spin-c Killing spinor with real Killing constant $\alpha\in\mathbb{R}^{\ast}$.
\end{prop}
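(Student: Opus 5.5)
The plan is to assemble the pieces already built in this section and close the argument with Proposition \ref{prop:KillingSpinorBilinear}, used in its ``if'' direction. That proposition says it suffices to produce three things on the $\alpha$-Sasakian manifold $(M,g)$: a spin-c structure, a Hermitian line bundle $\mathcal{L}$ with connection $A$, and a Cartan $n$-form $\rho\in\Omega^n_\C(M,\mathcal{L})$ satisfying $\nabla^{g,A}_X\rho=i^{n+1}\alpha*(X^\flat\wedge\rho)$.

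The steps, in order, are as follows.

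First, take the canonical spin-c structure induced by the reduction of the structure group to $\mathrm{U}(n)\subset\mathrm{SO}(2n+1)$, as in the preceding remark. I will check that its characteristic line bundle is indeed $\wedge^n_\C\mathcal{H}^{(1,0)}$, possibly up to the convention fixed by $\varepsilon_n$, so that the identification $\mathcal{L}\cong\wedge^n_\C\mathcal{H}^{(1,0)}$ is legitimate. The orientation comes from $\nu=\nu_{\mathcal{H}}\wedge\zeta$.

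Second, use the global well-definedness lemma to confirm that $\rho=\theta\otimes\mathfrak{l}$ is a global nowhere-vanishing section.

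Third, verify that $\rho$ is a Cartan $n$-form. Locally, $\theta^1,\dots,\theta^n$ satisfy $\escal{\theta^j,\theta^k}=0$ under the complex-bilinear extension of $g$, so they are isotropic and mutually orthogonal, and $\rho$ has exactly the form required in Lemma \ref{lemma:squaresriemannian2n1}.

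Fourth, invoke the lemma just proved, which gives the differential equation for $\rho$ with respect to the Hermitian connection $\nabla^A=\nabla^{\mathrm{Det}}+(-1)^{\binom n2}i\alpha\zeta\,\mathrm{Id}$. I will note that $\nabla^A$ is unitary because the added term is imaginary, so it comes from a connection $A\in\Omega^1(P,i\R)$.

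Fifth, apply Proposition \ref{prop:KillingSpinorBilinear}, via \cite[Thm.\ 5.10]{Complex_Differential_Spinors_2026}, to obtain a spinor $\eta\in\Gamma(S)$ whose complex-bilinear square is $\rho$ and which satisfies \eqref{eq:KillingSpinor}. Purity of $\eta$ then follows from Lemma \ref{lemma:squaresriemannian2n1}, because $\rho$ is a Cartan $n$-form. Alternatively, one can see it directly: $\mathrm{Ann}(\eta)$ is the $n$-dimensional isotropic space dual to the span of the $\theta^j$. Since $\rho$ vanishes nowhere, $\eta$ vanishes nowhere.

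The main obstacle is compatibility in the fourth and fifth steps. The spin-c structure whose spinor bundle $S$ realises $\rho$ as a square must have characteristic bundle exactly this $\mathcal{L}$, equipped with exactly this connection. Proposition \ref{prop:KillingSpinorBilinear} is stated as ``admits a Hermitian line bundle $\cL$'', so I would rely on \cite[Thm.\ 5.10]{Complex_Differential_Spinors_2026} to reconstruct both the spin-c structure and the spinor from $(\cL,\rho)$ with the sign $\ell$ fixed earlier. The first thing to check is that the deformation term $(-1)^{\binom n2}i\alpha\zeta$ does not alter the underlying topological line bundle. It does not, since it only changes the connection.
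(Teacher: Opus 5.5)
Your proposal is correct and follows the same route as the paper. The paper likewise takes $\mathcal{L}=\wedge^n_\C\mathcal{H}^{(1,0)}$ and the globally defined form $\rho=\theta\otimes\mathfrak{l}$, uses the deformed connection $\nabla^A=\nabla^{\mathrm{Det}}+(-1)^{\binom{n}{2}}i\alpha\zeta\,\mathrm{Id}_{\mathcal{L}}$, verifies $\nabla^{g,A}_X\rho=i^{n+1}\alpha*(X^\flat\wedge\rho)$, and concludes by Proposition \ref{prop:KillingSpinorBilinear}; the extra checks you list (isotropy of the $\theta^j$, unitarity of $\nabla^A$, and matching $\mathcal{L}$ with the characteristic bundle of the canonical spin-c structure) are the ones the paper leaves implicit.
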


% % % % % % % % % % % % % % % % % % % % % % % % % % % % % % % % % % % % % %
% % % % % % % % % % % % % % % % % % % % % % % % % % % % % % % % % % % % % %

\bibliographystyle{myamsplain}
\bibliography{biblio}

\end{document}